\numberwithin{equation}{section}
\theoremstyle{plain}
\newtheorem{thm}{Theorem}[section]
\newtheorem{lemma}[thm]{Lemma}
\newtheorem{corollary}[thm]{Corollary}
\theoremstyle{definition}
\newtheorem{rmk}[thm]{Remark}
\def\Gal{\operatorname{Gal}}
\newtheorem*{hypothesis*}{Hypothesis}
\title{Gamma factors for the Asai cube representation}
\author{SHIH-YU CHEN}
\address{Institute of Mathematics~\\Academia Sinica~\\ 6F, Astronomy-Mathematics Building, No.\,1, Sec.\,4, Roosevelt Road, Taipei 10617, R.O.C. (Taiwan)}
\email{sychen0626@gate.sinica.edu.tw}
\def\GL{{\rm{GL}}}
\def\GSp{{\rm GSp}}
\def\o{\frak{o}}
\def\A{{\mathbb A}}
\def\C{{\mathbb C}}
\def\R{{\mathbb R}}
\def\Z{{\mathbb Z}}
\def\<{\langle}
\def\>{\rangle}
\def\G{{\bf G}}
\def\bp{\begin{pmatrix}}
\def\ep{\end{pmatrix}}
\def\<{\langle}
\def\>{\rangle}
\def\GL{\operatorname{GL}}
\def\GSp{\operatorname{GSp}}
\def\1{\mathbf{1}}
\begin{document}
\maketitle

\begin{abstract}
We prove an equality between the gamma factors for the Asai cube representation of ${\rm R}_{E/F}\GL_2$ defined by the Weil$-$Deligne representations and the local zeta integrals of Ikeda and Piatetski-Shapiro$-$Rallis, where $E$ is an \'etale cubic algebra over a local field $F$ of characteristic zero. As an application we obtain the analytic properties of the automorphic $L$-functions for the Asai cube representation. 
\end{abstract}

\tableofcontents

\section{Introduction}

\subsection{Main results}
Let $F$ be a local field of characteristic zero and $E$ an \'etale cubic algebra over $F$. Denote by $\mathbb K$ the quadratic discriminant algebra of $E$. Let $\alpha$ be a basis of $E$ over $F$ and $\psi$ a non-trivial additive character of $F$. Let $\Pi$ be an irreducible generic admissible representation of $\GL_2(E)$. Denote by $\phi_\Pi : W_F' \rightarrow {}^L({\rm R}_{E/F}\GL_2)$ the $L$-parameter associated to $\Pi$ via the local Langlands correspondence. Here $W_F'$ is the Weil$-$Deligne group of $F$ and ${}^L({\rm R}_{E/F}\GL_2)$ is the Langlands dual group of ${\rm R}_{E/F}\GL_2$. We have the Asai cube representation (see \S\,\ref{S:Asai cubic Galois} for the precise definition)
\[
{\rm As} : {}^L({\rm R}_{E/F}\GL_2) \longrightarrow \GL(\C^2\otimes\C^2\otimes\C^2).
\]
Let $L(s,{\rm As}\,\Pi)$, $\varepsilon(s,{\rm As}\,\Pi,\psi)$, and $\gamma(s,{\rm As}\,\Pi,\psi)$ be the $L$-factor, $\varepsilon$-factor, and $\gamma$-factor, respectively, associated to the admisible representation ${\rm As}\circ\phi_\Pi$ of $W_F'$. On the other hand, associated to $\Pi$ we can define the local factors $L_{\rm PSR}(s,{\rm As}\,\Pi)$, $\varepsilon_{\rm PSR}(s,{\rm As}\,\Pi,\psi,\alpha)$, and $\gamma_{\rm PSR}(s,{\rm As}\,\Pi,\psi,\alpha)$ via the local zeta integrals of Ikeda and Piatetski-Shapiro$-$Rallis introduced in \cite{Ikeda1989} and \cite{PSR1987}. The local zeta integrals are of the form
\[
Z_{\tilde{\alpha}}(f_s,W) = \int_{F^\times U_0(F)\backslash \G(F)}f_s(\iota_{\tilde{\alpha}}(\eta g))W(g)\,dg.
\]
Here, $\tilde{\alpha}$ is a symplectic basis of the symplectic space $V(F) = E \oplus E$ equipped with the trace form, $\iota_{\tilde{\alpha}}$ is the isomorphism between $\GSp(V)(F)$ and $\GSp_6(F)$ induced by $\tilde{\alpha}$, $\G(F) = \{g \in \GL_2(E)\,\vert\,\det(g) \in F^\times\}\subseteq \GSp(V)(F)$, $U_0(F)$ is a unipotent subgroup of $\G(F)$, $\eta \in \GSp(V)(F)$ satisfies certain conditions, $f_s$ is a section in certain degenerate principal series representation of $\GSp_6(F)$, and $W$ is a Whittaker function of $\Pi$ with respect to $\psi_E = \psi\circ{\rm tr}_{E/F}$.
We refer to \S\,\ref {S:Asai cubic PSR} for more detail.

The main results of this paper are as follows:
\begin{thm}\label{T:1}
We have
\[
\gamma_{\rm PSR}(s,{\rm As}\,\Pi,\psi,\alpha) = \omega_\Pi(\Delta_{E/F}(\alpha))|\Delta_{E/F}(\alpha)|_F^{2s-1}\omega_{\mathbb{K}/F}(-1)\gamma(s,{\rm As}\,\Pi,\psi).
\]
Here $\omega_\Pi$ is the central character of $\Pi$, $\Delta_{E/F}(\alpha)$ is the relative discriminant of $\alpha$ for $E/F$, and $\omega_{\mathbb{K}/F}$ is the quadratic character associated to $\mathbb{K}/F$ by local class field theory.
\end{thm}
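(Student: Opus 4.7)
The plan is to verify that $\gamma_{\rm PSR}(s,{\rm As}\,\Pi,\psi,\alpha)$, defined through the local functional equation for the zeta integral $Z_{\tilde{\alpha}}$, matches the Weil--Deligne gamma factor $\gamma(s,{\rm As}\,\Pi,\psi)$ by checking that it satisfies the standard axioms---multiplicativity under parabolic induction, agreement in the unramified case, and compatibility with globalization---that uniquely determine it up to the explicit constant in the statement. The first step is to prove the local functional equation for $Z_{\tilde{\alpha}}(f_s,W)$: the standard intertwining operator $M(s)$ on the degenerate principal series of $\GSp_6(F)$ carries zeta integrals to scalar multiples of zeta integrals for the contragredient, and, after dividing by $L_{\rm PSR}$, the resulting proportionality constant defines $\gamma_{\rm PSR}$. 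The factor $\omega_\Pi(\Delta_{E/F}(\alpha))|\Delta_{E/F}(\alpha)|_F^{2s-1}$ arises by passing from the symplectic basis $\tilde{\alpha}$ attached to $\alpha$ to a fixed standard symplectic basis: the associated change of variables scales degenerate principal series sections by $|\Delta_{E/F}(\alpha)|_F^{2s-1}$ and picks up $\omega_\Pi$ evaluated on the determinant of the transition matrix.

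Second, I would establish multiplicativity. For $\Pi$ parabolically induced from data on a proper Levi of $\GL_2(E)$---equivalently, for $\Pi$ whose $L$-parameter becomes reducible after composition with ${\rm As}$---a Jacquet module and filtration analysis of the degenerate principal series of $\GSp_6$ along the parabolic matched to the Levi should express $\gamma_{\rm PSR}$ as a product of lower-rank gamma factors for the inducing data. The corresponding factorization of $\gamma(s,{\rm As}\,\Pi,\psi)$ on the Weil--Deligne side is automatic from the decomposition of ${\rm As}\circ\phi_\Pi$, so the theorem is reduced to the case where $\Pi$ is essentially square integrable. Together with the unramified computation, this handles all principal series $\Pi$.

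Third, for a nonarchimedean supercuspidal $\Pi$ I would globalize: choose a number field $\dot{F}$ and a global \'etale cubic algebra $\dot{E}/\dot{F}$ whose completions at a distinguished place $v_0$ recover $F$ and $E$, and construct a cuspidal automorphic representation $\dot{\Pi}$ of $\GL_2(\AA_{\dot{E}})$ whose component at $v_0$ is $\Pi$ and whose other components are principal series (or tempered ones handled by the archimedean step). The global functional equation for the Ikeda/PSR integral, together with Galois-side multiplicativity of the global $L$- and $\varepsilon$-factors and the already-known local matching for $v \neq v_0$, then forces the matching at $v_0$. The factor $\omega_{\mathbb{K}/F}(-1)$ is pinned down by direct computation at the auxiliary principal series places combined with the global product formula for the quadratic character of $\dot{\mathbb{K}}/\dot{F}$. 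The hardest step will be the archimedean case, where globalization is unavailable and both gamma factors must be matched by direct explicit computation for every generic irreducible $\Pi$ across the three possible shapes of $E$ (split, semisplit, and field); a secondary but nontrivial difficulty is tracking the $\alpha$-dependence uniformly through every reduction so that the constant comes out exactly as stated rather than merely as an opaque nonzero scalar.
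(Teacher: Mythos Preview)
Your strategy---multiplicativity to reduce to the essentially square-integrable case, then globalization for supercuspidals---is the classical axiomatic route, but it is \emph{not} the route the paper takes, and as written it has two real gaps.

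First, the multiplicativity step. You assert that a Jacquet-module/filtration analysis of the degenerate principal series of $\GSp_6$ will factor $\gamma_{\rm PSR}$ into lower-rank gamma factors matching the decomposition of ${\rm As}\circ\phi_\Pi$. No such multiplicativity statement for the PSR/Ikeda integral is in the literature when $E$ is a field, and the sketch you give is not enough: the relevant orbit structure of $P\backslash\GSp_6/\iota_{\tilde\alpha}(\G)$ and the way the intertwining operator interacts with it do not obviously produce clean Tate-type factors. The paper sidesteps multiplicativity entirely.

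Second, your globalization step silently assumes a \emph{second} global functional equation for $L^S(s,{\rm As}\,\dot\Pi)$ with the Weil--Deligne local gamma factors, against which the PSR functional equation can be compared. That second equation is not free: it comes from the Langlands--Shahidi method together with the identification $\gamma_{\rm LS}(s,{\rm As}\,\Pi_v,\psi_v)=\gamma(s,{\rm As}\,\Pi_v,\psi_v)$ proved by Hundley--Liu. Without naming this input, the comparison $\prod_{v\in S}\gamma_{{\rm PSR},v}=\prod_{v\in S}\gamma_v$ has no source.

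What the paper actually does: it embeds $\Pi$ in a holomorphic family $\Pi_\lambda$ over the parameter domain $\mathcal D(\Pi)$, proves uniform convergence estimates so that both sides of the desired identity are holomorphic in $(s,\lambda)$ on suitable half-planes, and then uses the limit multiplicity theorem of Finis--Lapid--M\"uller to produce, for every open set of tempered parameters, a global cuspidal $\dot\Pi$ with $\dot\Pi_{v_1}=\Pi_\lambda$ and all other nonarchimedean components unramified (outside one controlled auxiliary place). The crude identity $\prod_{v\in S}\gamma_{{\rm PSR},v}=\prod_{v\in S}\gamma_v$ (via Langlands--Shahidi and Hundley--Liu) plus the already-known cases (archimedean, split $E$, and an explicit unramified computation for tamely totally ramified cubic $E$) isolate the place $v_1$. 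This yields the identity on a dense subset of tempered $\lambda$; holomorphy extends it to $|\lambda|_\Pi<1/2$; and a holomorphic-continuation lemma of Beuzart-Plessis (exploiting the $s$-periodicity and the uniform half-plane holomorphy) extends it to all of $\mathcal D(\Pi)$. No multiplicativity is ever used.

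One last correction: you flag the archimedean case as the hardest step, but in fact it is already known (Ikeda, Ramakrishnan, and Chen--Cheng--Ishikawa cover all three shapes of $E$), and the paper simply cites it.
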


\begin{rmk}
When $E$ is unramified over $F$ and $\Pi$ is unramified, the identity follows from the explicit calculation due to Piatetski-Shapiro and Rallis in \cite{PSR1987}. When $E= F \times F \times F$, the identity was established by Ikeda and Ramakrishnan in \cite{Ikeda1989} and \cite{Rama2000}, respectively. When $E = F' \times F$ for some quadratic extension $F'$ of $F$, the assertion was partially proved by Chen$-$Cheng$-$Ishikawa in \cite{CCI2018}.
\end{rmk}

\begin{corollary}\label{C:1}
Assume $\max\{|L(\Pi)| , |L(\Pi^\vee)|\} < 1/2$. Then
\begin{align*}
L_{\rm PSR}(s,{\rm As}\,\Pi) &= L(s,{\rm As}\,\Pi),\\
\varepsilon_{\rm PSR}(s,{\rm As}\,\Pi,\psi,\alpha) &= \omega_\Pi(\Delta_{E/F}(\alpha))|\Delta_{E/F}(\alpha)|_F^{2s-1}\omega_{\mathbb{K}/F}(-1)\varepsilon(s,{\rm As}\,\Pi,\psi).
\end{align*}
Here $L(\Pi)$ and $L(\Pi^\vee)$ are defined in (\ref{E:parameter}).
\end{corollary}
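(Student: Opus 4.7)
The plan is to deduce the corollary from Theorem \ref{T:1} by the standard argument that converts a gamma-factor identity into equalities of $L$- and $\varepsilon$-factors, using the defining relation
\[
\gamma(s,{\rm As}\,\Pi,\psi) = \varepsilon(s,{\rm As}\,\Pi,\psi) \cdot \frac{L(1-s,{\rm As}\,\Pi^\vee)}{L(s,{\rm As}\,\Pi)}
\]
and its PSR analogue for $\gamma_{\rm PSR}$. Substituting both into the identity of Theorem \ref{T:1} and rearranging gives
\[
R(s) \cdot \frac{\varepsilon_{\rm PSR}(s,{\rm As}\,\Pi,\psi,\alpha)}{\varepsilon(s,{\rm As}\,\Pi,\psi)} = \omega_\Pi(\Delta_{E/F}(\alpha))|\Delta_{E/F}(\alpha)|_F^{2s-1}\omega_{\mathbb{K}/F}(-1),
\]
where
\[
R(s) = \frac{L(s,{\rm As}\,\Pi) \cdot L_{\rm PSR}(1-s,{\rm As}\,\Pi^\vee)}{L_{\rm PSR}(s,{\rm As}\,\Pi) \cdot L(1-s,{\rm As}\,\Pi^\vee)}.
\]
Since both $\varepsilon$-factors are units in $\mathbb{C}[q^{s},q^{-s}]$, the rational function $R(s)$ must itself be a monomial in $q^{-s}$ times a nonzero constant, and in particular has no finite zeros or poles.

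Next I would invoke the hypothesis $\max\{|L(\Pi)|,|L(\Pi^\vee)|\} < 1/2$ to separate the poles of the numerator and denominator of $R(s)$ into disjoint half-planes. The bound on the exponents of the Langlands parameter forces the poles of $L(s,{\rm As}\,\Pi)$ to lie in $\{{\rm Re}(s) < 1/2\}$, and those of $L(1-s,{\rm As}\,\Pi^\vee)$ to lie in $\{{\rm Re}(s) > 1/2\}$. The analogous half-plane containment for the PSR $L$-factors should follow from the absolute convergence range of the zeta integral $Z_{\tilde{\alpha}}(f_s,W)$ together with the definition of $L_{\rm PSR}(s,{\rm As}\,\Pi)$ as a greatest common divisor of such integrals. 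Because the two half-planes are disjoint, any pole of the numerator of $R(s)$ must cancel against a pole of the denominator lying in the same half-plane, which matches the poles of $L_{\rm PSR}(s,{\rm As}\,\Pi)$ with those of $L(s,{\rm As}\,\Pi)$, and similarly on the dual side.

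Once the pole sets with their multiplicities coincide, the normalisation $L_{\rm PSR}(s,{\rm As}\,\Pi)^{-1}, L(s,{\rm As}\,\Pi)^{-1} \in \mathbb{C}[q^{-s}]$ with constant term $1$ forces the equality $L_{\rm PSR}(s,{\rm As}\,\Pi) = L(s,{\rm As}\,\Pi)$. Substituting this back into Theorem \ref{T:1} immediately yields the stated $\varepsilon$-factor identity. The main obstacle I anticipate is the second step: pinning down the half-plane location of the poles of $L_{\rm PSR}(s,{\rm As}\,\Pi)$. This will require a careful analysis of the integrand of $Z_{\tilde{\alpha}}(f_s,W)$, together with estimates on the Whittaker function $W$ and the section $f_s$ expressed in terms of the exponents of $\Pi$, so as to certify that $L_{\rm PSR}$ can absorb only poles coming from the expected half-plane. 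The hypothesis $|L(\Pi)|<1/2$ (and the corresponding bound for $\Pi^\vee$) should be precisely the nearly-tempered condition that makes the convergence estimate go through.
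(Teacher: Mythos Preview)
Your proposal is correct and follows essentially the same approach as the paper: rewrite the $\gamma$-factor identity of Theorem~\ref{T:1} as an equality of $L$-function ratios times a unit, separate the poles of the four $L$-factors into the two disjoint half-planes $\{\Re(s)<1/2\}$ and $\{\Re(s)>1/2\}$ using the hypothesis, and conclude from the standard normalization. The obstacle you anticipate---locating the poles of $L_{\rm PSR}(s,{\rm As}\,\Pi)$ in the correct half-plane---is exactly what the paper handles by invoking \cite[Lemma 2.1]{Ikeda1992} (and its own Lemma~\ref{L:convergence WD} on the Weil--Deligne side), so you need not redo that convergence analysis from scratch.
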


Now we switch to a global setting. Let $F$ be a number field and $E$ an \'etale cubic algebra over $F$. Denote by $\A_E$ and $\A_F$ the rings of adeles of $E$ and $F$, respectively. Let $\psi$ be a non-trivial additive character of $\A_F/F$. Let $\Pi$ be an irreducible unitary cuspidal automorphic representation of $\GL_2(\A_E)$ with central character $\omega_{\Pi}$. Write $\omega = \omega_\Pi \vert_{\A_F^\times}$. Let
\[
L(s,{\rm As}\,\Pi) = \prod_{v}L(s,{\rm As}\,\Pi_v), \quad \varepsilon(s,{\rm As}\,\Pi) = \prod_v\varepsilon(s,{\rm As}\,\Pi_v,\psi_v)
\]
be the automorphic $L$-function and $\varepsilon$-factor associated to $\Pi$ and the Asai cube representation. 
We have the following three cases:
\[
\begin{cases}
E=F\times F \times F & \mbox{ Case 1},\\
E=F'\times F \mbox{ for some quadratic extension $F'$ of $F$}& \mbox{ Case 2},\\
E \mbox{ is a field} & \mbox{ Case 3}.
\end{cases}
\]
Combining Corollary \ref{C:1} and the results in \cite{Ikeda1992}, \cite{Rama2000}, and \cite{KS2002}, we have the following description of the analytic properties of $L(s,{\rm As}\,\Pi)$.

\begin{thm}\label{T:2}
The $L$-function $L(s,{\rm As}\,\Pi)$ is absolutely convergent for ${\rm Re}(s)\geq3/2$, admits meromorphic continuation to $s \in \C$, bounded in vertical strips of finite width, and satisfies the functional equation
\[
L(s,{\rm As}\,\Pi) = \varepsilon(s,{\rm As}\,\Pi) L(1-s,{\rm As}\,\Pi^\vee).
\]
If either $\omega^2$ is not principal or $\omega$ is principal, then $L(s,{\rm As}\,\Pi)$ is entire.
If $\omega^2$ is principal and $\omega$ is not principal, we may assume $\omega^2=1$ and $\omega \neq 1$ and let $K$ be the quadratic extension of $F$ associated to $\omega$ by class field theory. Then $L(s,{\rm As}\,\Pi)$ is not entire if and only if $K \neq F'$ in Case 2 and there exists a unitary Hecke character $\chi$ of $\A_{E\otimes_FK}^\times$ with $\chi \vert_{\A_K^\times} = 1$ such that 
$\Pi = {\rm Ind}_{E\otimes_FK}^{E}(\chi)$. In this case, we have
\[
L(s,{\rm As}\,\Pi) = \zeta_K(s)\begin{cases}
L(s,\chi_1^{-1}\chi_1^\sigma)L(s,\chi_2^{-1}\chi_2^\sigma)L(s,\chi_3^{-1}\chi_3^\sigma) & \mbox{ in Case 1 and $\chi=(\chi_1,\chi_2,\chi_3)$},\\
L(s,\chi_1^{-1}\chi_1^\sigma)L(s,\chi_2^{-1}\chi_2^\sigma) & \mbox{ in Case 2 and $\chi=(\chi_1,\chi_2)$},\\
L(s,\chi^{-1}\chi^\sigma) & \mbox{ in Case 3}.
\end{cases}
\]
Here ${\rm Ind}_{E\otimes_FK}^{E}(\chi)$ is the automorphic induction of $\chi$ from $\A_{E\otimes_FK}^\times$ to $\GL_2(\A_E)$, $\zeta_K(s)$ is the completed Dedekind zeta function of $K$, and $\sigma$ is the generator of 
\[
\begin{cases}
\Gal(K/F) & \mbox{ in Case 1},\\
\Gal(F'K/F') & \mbox{ in Case 2},\\
\Gal(EK/E) & \mbox{ in Case 3}.
\end{cases}
\]
\end{thm}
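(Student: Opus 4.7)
The plan is to leverage the local identity of Corollary \ref{C:1} together with the global integral representation to reduce everything to analytic properties of a Siegel Eisenstein series on $\GSp_6$. Since $\Pi$ is unitary cuspidal, its local components satisfy bounds toward Ramanujan, so the hypothesis $\max\{|L(\Pi_v)|,|L(\Pi_v^\vee)|\}<1/2$ of Corollary \ref{C:1} holds at every place of $F$. Taking the product over all places, and invoking the product formula on any chosen global basis $\alpha$ of $E$ over $F$, the global $L$- and $\varepsilon$-factors defined via the Weil--Deligne representation coincide with those defined via the Ikeda--Piatetski-Shapiro--Rallis integrals:
\[
L(s,{\rm As}\,\Pi) = L_{\rm PSR}(s,{\rm As}\,\Pi),\qquad \varepsilon(s,{\rm As}\,\Pi) = \varepsilon_{\rm PSR}(s,{\rm As}\,\Pi).
\]
Thus it suffices to analyze the PSR object globally.

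The global PSR integral is a pairing of a Siegel Eisenstein series on $\GSp_6(\AA_F)$ against cusp forms in $\Pi$, Eulerian after unfolding against a factorizable Whittaker vector. The standard theory of Siegel Eisenstein series then furnishes meromorphic continuation of $L_{\rm PSR}(s,{\rm As}\,\Pi)$, the functional equation $s\leftrightarrow 1-s$ (via the intertwining operator combined with the local functional equations of Corollary \ref{C:1}), and boundedness in vertical strips of finite width (by Gelbart--Lapid). Absolute convergence for $\Re(s)\ge 3/2$ follows from absolute convergence of the Eisenstein series in that range.

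For the pole analysis, the only possible poles of the Siegel Eisenstein series in the strip $0\le\Re(s)\le 1$ occur at $s=1$ when $\omega^2$ is trivial and $\omega$ is nontrivial, which yields the entirety claims. In Case 1 the Asai cube is the triple product $L$-function $L(s,\Pi_1\times\Pi_2\times\Pi_3)$, and the factorization at a pole is the content of Ikeda \cite{Ikeda1992} combined with Ramakrishnan \cite{Rama2000}. In Case 2, with $E=F'\times F$ and $\Pi=\Pi'\otimes\Pi_F$, the Asai cube decomposes as ${\rm As}\,\Pi\simeq {\rm As}_{F'/F}(\Pi')\boxtimes\Pi_F$, and the poles are then analyzed via the Kim--Shahidi Asai lift \cite{KS2002} together with the pole structure of a $\GL_4\times\GL_2$ Rankin--Selberg convolution.

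The main obstacle is Case 3, where $E$ is a cubic field and no prior reference applies. Here the residue of the Siegel Eisenstein series at $s=1$ is identified, via a Siegel--Weil type identity, with a theta integral for a dual pair involving a two-dimensional orthogonal group whose discriminant character is $\omega$ (equivalently, attached to $K/F$). A nonzero residue therefore forces $\Pi$ to be a theta lift from this orthogonal group, and by the standard dictionary between such theta lifts and automorphic induction from quadratic extensions this lift is exactly the automorphic induction ${\rm Ind}_{E\otimes_F K}^E(\chi)$ of a Hecke character $\chi$ on $\AA_{E\otimes_F K}^\times$ with $\chi\vert_{\AA_K^\times}=1$. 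The explicit factorization $L(s,{\rm As}\,\Pi)=\zeta_K(s)\cdot L(s,\chi^{-1}\chi^\sigma)$ then follows by computing the Langlands parameter of such an induction and expanding the tensor-induction formula defining ${\rm As}$. The remaining delicate point is nonvanishing of the residue for some choice of test data, the standard nonvanishing problem for theta lifts, which may be handled via Rallis's inner product formula.
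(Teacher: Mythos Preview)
Your overall strategy matches the paper's: invoke the Kim--Shahidi bound \cite{KS2002} to verify the hypothesis of Corollary~\ref{C:1} at every place, identify the Weil--Deligne local factors with the PSR ones, and then read off meromorphic continuation, functional equation, and boundedness in vertical strips from the analytic properties of the Siegel Eisenstein series on $\GSp_6$ via the integral representation. That part is fine.

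The real discrepancy is in your pole analysis. You treat Case~3 (and to some extent Case~2) as an ``obstacle'' requiring new work---a Siegel--Weil identification of the residue, recognition of the residue as a theta lift from a rank-one orthogonal group attached to $K/F$, and a nonvanishing argument via the Rallis inner product formula. But the paper simply cites \cite[Theorems 2.6--2.8]{Ikeda1992} for the complete description of the poles in \emph{all three cases}; Ikeda's 1992 paper already carried out exactly this residue computation for the general \'etale cubic algebra $E/F$, not only the split case. So there is no new content here beyond the identification $L_{\rm PSR}=L$ furnished by Corollary~\ref{C:1}. Your sketch of the residue mechanism is in fact a reasonable summary of what Ikeda does, but it is not a gap you need to fill.

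A secondary point: your Case~2 detour through the Asai transfer of $\Pi'$ to $\GL_4$ and a $\GL_4\times\GL_2$ Rankin--Selberg analysis is unnecessary and does not match the paper. It could be made to work, but it introduces dependence on functoriality results that the Eisenstein-series method avoids entirely; again Ikeda's direct analysis of the residue handles this case uniformly. Finally, your claim that the Eisenstein series has a pole ``only at $s=1$ when $\omega^2$ is trivial and $\omega$ is nontrivial'' elides the case $\omega=1$: the Eisenstein series can still have a pole there, and one needs Ikeda's argument (\cite[Propositions 2.3--2.5]{Ikeda1992}) that the integral against a cusp form kills it.
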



\subsection{An outline of the proof}
We sketch the proof of Theorem \ref{T:1}. Since the assertion is known when $F$ is archimedean, we assume $F$ is non-archimedean. Let $f_s$ be a good section of $I(\omega,s)$ and $W$ a Whittaker function of $\Pi$ with respect to $\psi_E$. By the functional equation of the local zeta integrals, it suffices to prove
\begin{align}\label{E:outline 1}
Z_{\tilde{\alpha}}(M_{\rm w}^*f_s,W^\vee) = \omega_\Pi(\Delta_{E/F}(\alpha))|\Delta_{E/F}(\alpha)|_F^{2s-1}\omega_{\mathbb{K}/F}(-1)\gamma(s,{\rm As}\,\Pi,\psi) Z_{\tilde{\alpha}}(f_s,W).
\end{align}
We extend $\Pi$ to a family of irreducible generic admissible representations $\Pi_\lambda$ of $\GL_2(E)$ defined in (\ref{E:holomorphic family}).
Here $M_{\rm w}^*$ is the normalized intertwining operator (\ref{E:io}), $W^\vee(g) = \omega_\Pi(\det(g))^{-1}W(g)$, and $\lambda$ varies in the domain $\mathcal{D}(\Pi)$ defined in (\ref{E:domain}). Write $\omega_\lambda = \omega_{\Pi_\lambda}\vert_{F^\times}$. Let $f_{s,\lambda}$ be a good section of $I(\omega_\lambda,s)$ and  $W_\lambda$ a holomorphic family of Whittaker functions of $\Pi_\lambda$ with respect to $\psi_E$ extending $f_s$ and $W$, respectively (cf.\,\S\,\ref{S:Whittaker}). Let 
\[
\mathcal{Z}_1(s,\lambda) = \frac{Z_{\tilde\alpha}(M_{\rm w}^*f_{-s+1/2,\lambda},W_\lambda^\vee)}{L(s+1/2,{\rm As}\,\Pi_\lambda^\vee)}
\]
and
\[
\mathcal{Z}_2(s,\lambda) = \omega_{\Pi_\lambda}(\Delta_{E/F}(\alpha))|\Delta_{E/F}(\alpha)|_F^{2s}\omega_{\mathbb{K}/F}(-1)\varepsilon(s+1/2,{\rm As}\,\Pi_\lambda,\psi,\alpha)\frac{Z_{\tilde \alpha}(f_{s+1/2,\lambda},W_\lambda)}{L(s+1/2,{\rm As}\,\Pi_\lambda)}
\]
be meromorphic functions on $\C \times \mathcal{D}(\Pi)$.
By the uniform asymptotic estimate for $W_\lambda$ proved in Lemma \ref{L:analytic family 2}, we show in Lemma \ref{L:convergence PSR} below that $\mathcal{Z}_1$ and $\mathcal{Z}_2$ define holomorphic functions on the domain
\[
\{(s,\lambda) \in \C \times \mathcal{D}(\Pi) \,\vert\, {\rm Re}(s) > |\lambda|_\Pi-1/2\}.
\]
Here $|\lambda|_\Pi \in \R_{ \geq 0}$ is the absolute value of $\lambda$ with respect to $\Pi$ defined in (\ref{E:parameter}). Now we use the limit multiplicity method, which is a global-to-local argument (for example, see \cite[\S\,3.8]{Raphael2018} and \cite[\S\,5.3]{CI2019}). More precisely, based on the following ingredients: 
\begin{itemize}
\item the limit multiplicity property for the principal congruence subgroups of $\GL_2$ proved in \cite{FLM2015}, \item the known cases for (\ref{E:outline 1}) recalled in Lemma \ref{L:unramified gamma} and Corollary \ref{L:unramified gamma2},
\item the equality between the Asai cube $\gamma$-factors defined by the Weil$-$Deligne representation and the Langlands$-$Shahidi method proved in \cite{HL2018},
\end{itemize}
we prove that the functional equation
\begin{align}\label{E:outline 2}
\mathcal{Z}_1(-s,\lambda) = \mathcal{Z}_2(s,\lambda)
\end{align}
holds for $s \in \C$ and $\lambda$ in a dense subset of 
\[
\{\lambda \in \mathcal{D}(\Pi)\,\vert\,\Pi_\lambda\mbox{ is tempered}\}.
\]
Note that $\Pi_\lambda$ is tempered if and only if $|\lambda|_\Pi=0$. It then follows from the continuity and holomorphicity that $\mathcal{Z}_1$ and $\mathcal{Z}_2$ define holomorphic functions on the domain
\[
\C \times \{\lambda \in \mathcal{D}(\Pi)\,\vert\,|\lambda|_\Pi<1/2\}
\]
and satisfy the functional equation (\ref{E:outline 2}). Finally, we are in the position to apply \cite[Proposition 2.8.1]{Raphael2018}, which together with Lemma \ref{L:convergence PSR} imply that $\mathcal{Z}_1$ and $\mathcal{Z}_2$ admit holomorphic continuation to $\C \times \mathcal{D}(\Pi)$ and satisfy the functional equation (\ref{E:outline 2}). In particular, (\ref{E:outline 1}) follows.

\subsection{Notation}
Let $F$ be a local field of characteristic zero. When $F$ is non-archimedean, let $\o_F$ be the ring of integers of $F$, $\varpi_F$ a uniformizer of $\o_F$, $q_F$ the cardinality of $\o_F / \varpi_F \o_F$, $|\mbox{ }|_F$ the absolute value on $F$ normalized so that $|\varpi_F|_F = q_F^{-1}$, and ${\rm ord}_F$ the valuation on $F$ normalized so that ${\rm ord}_F(\varpi_F)=1$. When $F$ is archimedean, let $|\mbox{ }|_{\R}=|\mbox{ }|$ be the usual absolute value on $\R$ and $|z|_\C=z\overline{z}$ on $\C$.

An additive character $\psi$ of $F$ is a continuous homomorphism $\psi : F \rightarrow \C^{\times}$. For $a \in F^{\times}$, let $\psi^a$ be the additive character defined by $\psi^a(x)=\psi(ax)$.

A character $\chi$ of $F^{\times}$ is a continuous homomorphism $\chi : F^{\times}\rightarrow \C^{\times}$. For a character $\chi$ of $F^\times$, let ${\rm wt}(\chi) \in \R$ defined so that $|\chi| = |\mbox{ }|_F^{{\rm wt}(\chi)}$ and denote by $L(s,\chi)$, $\varepsilon(s,\chi,\psi)$, and $\gamma(s,\chi,\psi)$ 
the $L$-factor, $\varepsilon$-factor, and $\gamma$-factor of $\chi$, respectively, with respect to an additive character $\psi$ of $F$ defined in \cite{Tate1979}.

Let $B$ be the standard Borel subgroup of $\GL_2$ consisting of upper triangular matrices and $N$ its unipotent radical. We put 
\[{\bf a}(\nu) = \bp \nu & 0 \\ 0 & 1 \ep,\quad {\bf d}(\nu) =   \bp 1 & 0 \\ 0 & \nu \ep,\quad {\bf m}(t)= \bp t & 0 \\ 0 & t^{-1}\ep,\quad {\bf n}(x) = \bp 1 & x \\ 0 & 1\ep,\quad w = \bp 0 & 1 \\ -1 & 0\ep\]
for $\nu,t \in \mathbb{G}_m$ and $x \in \mathbb{G}_a$.

\section{Holomorphic family of Whittaker functions}\label{S:Whittaker}

Let $F$ be a local field of characteristic zero and $\psi$ a non-trivial additive character of $F$. Let
\[
K=\begin{cases}
\GL_2(\o_F) & \mbox{ if $F$ is non-archimedean},\\
{\rm O}(2) & \mbox{ if $F=\R$},\\
{\rm U}(2) & \mbox{ if $F=\C$},
\end{cases}
\]
be a maximal compact subgroup of $\GL_2(F)$.
Denote by $C_0^{\infty}(N(F)\backslash \GL_2(F),\psi)$ the space of smooth functions $W: \GL_2(F) \rightarrow \C$ such that
\begin{itemize}
\item For $x \in F$ and $g \in \GL_2(F)$,
\[
W({\bf n}(x)g) = \psi(x)W(g).
\]
\item $W$ is right $K$-finite.
\end{itemize}
Let $\pi$ be an irreducible generic admissible representation of $\GL_2(F)$ with central character $\omega_\pi$. We denote by $\pi^\vee$ the contragredient representation of $\pi$ and by $\mathcal{W}(\pi,\psi)$ the Whittaker model of $\pi$ with respect to $\psi$.
Recall that $\mathcal{W}(\pi,\psi)$ is the image of a non-zero intertwining map $\pi \rightarrow C_0^{\infty}(N(F)\backslash \GL_2(F),\psi)$.
For $W \in \mathcal{W}(\pi,\psi)$, we define $W^\vee \in \mathcal{W}(\pi^\vee,\psi)$ by
\[
W^\vee(g) = \omega_\pi(\det(g))^{-1}W(g).
\]
We define $l(\pi) \in \R$ by
\[
l(\pi) = \begin{cases}
{\rm wt}(\omega_\pi)/2 & \mbox{ if $\pi$ is essentially square-integrable},\\
\min\{{\rm wt}(\chi_1),{\rm wt}(\chi_2)\} & \mbox{ if $\pi = {\rm Ind}_{B(F)}^{\GL_2(F)}(\chi_1 \otimes \chi_2)$}.
\end{cases}
\]
If $\pi$ is essentially square-integrable and $\lambda \in \C$, we define $\pi_\lambda = \pi \otimes |\mbox{ }|_F^{\lambda}$. If $\pi = {\rm Ind}_{B(F)}^{\GL_2(F)}(\chi_1 \otimes \chi_2)$ and $\lambda = (\lambda_1,\lambda_2) \in \C^2$, we define
\[
\pi_\lambda = {\rm Ind}_{B(F)}^{\GL_2(F)}(\chi_1|\mbox{ }|_F^{\lambda_1} \otimes \chi_2|\mbox{ }|_F^{\lambda_2}).
\]
Let $\mathcal{D}(\pi)$ be the domain associated to $\pi$ defined by
\[
\mathcal{D}(\pi) = \begin{cases}
\C & \mbox{ if $\pi$ is essentially square-integrable},\\
\{\lambda \in \C^2 \,\vert\, \pi_\lambda \mbox{ is irreducible}\} & \mbox{ if $\pi$ is a principal series representation}.
\end{cases}
\]
For $\lambda \in \mathcal{D}(\pi)$, define $|\lambda|_\pi \in \R_{\geq 0}$ by
\[
|\lambda|_\pi = \begin{cases}
|{\rm wt}(\omega_\pi)/2+{\rm Re}(\lambda)| & \mbox{ if $\pi$ is essentially square-integrable},\\
\max\{|{\rm wt}(\chi_1)+{\rm Re}(\lambda_1)|,|{\rm wt}(\chi_2)+{\rm Re}(\lambda_2)|\} & \mbox{ if $\pi = {\rm Ind}_{B(F)}^{\GL_2(F)}(\chi_1 \otimes \chi_2)$}.
\end{cases}
\]

We call a map
\[
\mathcal{D}(\pi) \longrightarrow C_0^{\infty}(N(F)\backslash \GL_2(F),\psi),\quad \lambda \longmapsto W_\lambda
\]
a holomorphic family of Whittaker functions of $\pi_\lambda$ with respect to $\psi$ if it satisfies the following conditions:
\begin{itemize}
\item The map $(\lambda,g) \mapsto W_\lambda(g)$ is continuous.
\item For each $g \in \GL_2(F)$, the map $\lambda \mapsto W_\lambda(g)$ is holomorphic on $\mathcal{D}(\pi)$.
\item For each $\lambda \in \mathcal{D}(\pi)$, the function $g \mapsto W_\lambda(g)$ belongs to $\mathcal{W}(\pi_\lambda,\psi)$.
\item $W_\lambda$ is right $K$-finite.
\end{itemize}
We recall a construction of holomorphic family of Whittaker functions. Let $\omega_\psi$ be the Weil representation of $\GL_2(F)$ on $\mathcal{S}(F^2)$, the space of Schwartz functions on $F^2$, with respect to $\psi$ defined by the following rules:
\begin{itemize}
\item For $t \in F^\times$,
\[
\omega_\psi({\bf m}(t))\varphi(x,y) = |t|_F\varphi(tx,ty).
\]
\item For $b \in F$,
\[
\omega_\psi({\bf n}(b))\varphi(x,y) = \psi(bxy)\varphi(x,y).
\]
\item 
\[
\omega_\psi(w)\varphi(x,y) = \int_{F^2}\varphi(u,v)\psi(uy+vx)\,du\,dv,
\]
where $du$ and $dv$ are self-dual with respect to $\psi$.
\item For $\nu \in F^\times$,
\[
\omega_\psi({\bf a}(\nu))\varphi(x,y) = \varphi(\nu x,y).
\]
\end{itemize}
Let
\begin{align*}
&\mathcal{S}_\psi(F^2) \\ 
&= \begin{cases}
\mathcal{S}(F^2) & \mbox{ if $F$ is non-archimedean},\\
\{P(x,y)e^{-\pi|a|_\R(x^2+y^2)} \,\vert\, P \in \C[x_1,x_2] \} &\mbox{ if $F=\R$ and $\psi(x)=e^{2\pi \sqrt{-1}\,ax}$},\\
\{P(x,\overline{x},y,\overline{y})e^{-2\pi|a|_\C^{1/2}(x\overline{x}+y\overline{y})} \,\vert\, P \in \C[x_1,x_2,x_3,x_4] \} &\mbox{ if $F=\C$ and $\psi(x)=e^{2\pi \sqrt{-1}\,{\rm tr}_{\C/\R}(ax)}$}.
\end{cases}
\end{align*}
If $\pi$ is essentially square-integrable and $W \in \mathcal{W}(\pi,\psi)$, then the map 
\begin{align}\label{E:holomorphic family DS}
\lambda \longmapsto W\cdot |\mbox{ }|_F^\lambda \circ \det
\end{align}
is a holomorphic family of Whittaker functions of $\pi_\lambda$ with respect to $\psi$. If $\pi = {\rm Ind}_{B(F)}^{\GL_2(F)}(\chi_1 \otimes \chi_2)$ and $\varphi \in \mathcal{S}_\psi(F^2)$, then the map
\begin{align}\label{E:holomorphic family PS}
\begin{split}
\lambda &\longmapsto W(\varphi,\lambda),\\
W(\varphi,\lambda)(g) &= \chi_1(\det(g))|\det(g)|_F^{\lambda_1+1/2} \int_{F^\times}\omega_\psi(g)\varphi(t,t^{-1}) \chi_1\chi_2^{-1}(t)|t|_F^{\lambda_1-\lambda_2}\,d^\times t
\end{split}
\end{align}
is a holomorphic family of Whittaker functions of $\pi_\lambda$ with respect to $\psi$. Note that for any fixed $\lambda_0 \in \mathcal{D}(\pi)$, any holomorphic family of Whittaker functions can be written as a linear combination, with holomorphic functions of $\lambda$ as coefficients, of holomorphic families of the form (\ref{E:holomorphic family DS}) or (\ref{E:holomorphic family PS}) in a neighborhood of $\lambda_0$.

We have the following uniform asymptotic estimate for holomorphic families of Whittaker functions.

\begin{lemma}\label{L:analytic family}
Assume $F$ is non-archimedean. Let $W_\lambda$ be a holomorphic family of Whittaker functions of $\pi_\lambda$ with respect to $\psi$. Let $\epsilon>0$. There exist an integer $n$ independent of $\epsilon,\lambda$ and a constant $C_{\lambda,\epsilon}>0$ bounded uniformly as $\lambda$ varies in a compact set such that
\[
|W_\lambda({\bf a}(\nu)k)| \leq C_{\lambda,\epsilon}\cdot \mathbb{I}_{\varpi_F^{-n}\o_F}(\nu)|\nu|_F^{l(\pi_\lambda)+1/2-\epsilon}
\]
for $\nu \in F^\times$ and $k \in \GL_2(\o_F)$.
\end{lemma}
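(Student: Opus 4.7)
The plan is to reduce the statement for a general holomorphic family $W_\lambda$ to the two canonical constructions (\ref{E:holomorphic family DS}) and (\ref{E:holomorphic family PS}), and then to verify the estimate for each. First I would cover the given compact subset of $\mathcal{D}(\pi)$ by finitely many open neighborhoods in each of which $W_\lambda$ can be written as a finite linear combination, with holomorphic (hence locally bounded) coefficients, of standard families of one of the two canonical shapes. Since bounding a finite sum of such standard families uniformly produces a uniform bound for $W_\lambda$ on the compact set, with $n$ taken to be the maximum over the finitely many neighborhoods and choices of standard families, it suffices to prove the estimate for the two canonical cases.

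In the essentially square-integrable case, $W_\lambda(g) = W(g)|\det(g)|_F^\lambda$ for a single $W \in \mathcal{W}(\pi,\psi)$. Right $K$-finiteness of $W$ and compactness of $K$ reduce matters to finitely many choices of $W$; for each such $W$, the standard Kirillov-model description of an essentially square-integrable representation of $\GL_2(F)$ gives an integer $n$ with ${\rm supp}(W \circ {\bf a}) \subseteq \varpi_F^{-n}\o_F$ and a pointwise bound $|W({\bf a}(\nu))| \leq C|\nu|_F^{{\rm wt}(\omega_\pi)/2 + 1/2}$. Multiplying by $|\nu|_F^{{\rm Re}(\lambda)}$ yields a bound with exponent $l(\pi_\lambda) + 1/2$, which on the bounded-above support of $\nu$ is also dominated by $|\nu|_F^{l(\pi_\lambda) + 1/2 - \epsilon}$.

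In the principal series case I would analyze (\ref{E:holomorphic family PS}) directly. Writing $\varphi_k = \omega_\psi(k)\varphi$ and applying the Weil-representation formulas for ${\bf a}(\nu)$ gives
\[
W(\varphi,\lambda)({\bf a}(\nu)k) = \chi_1(\nu)|\nu|_F^{\lambda_1 + 1/2} \int_{F^\times} \varphi_k(\nu t, t^{-1})(\chi_1\chi_2^{-1})(t)|t|_F^{\lambda_1 - \lambda_2} \, d^\times t.
\]
Compactness of $K$ and smoothness of $\omega_\psi$ supply a single compact $\Omega \subset F^2$ supporting every $\varphi_k$ and a uniform sup-norm bound. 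The requirement $(\nu t, t^{-1}) \in \Omega$ constrains $|t|_F$ and $|\nu|_F\cdot|t|_F$ to fixed ranges, producing the support bound in $\nu$ with a uniform $n$ and reducing the integral to a finite sum over $|t|_F \in q_F^{\Z}$. Setting $A = {\rm wt}(\chi_1\chi_2^{-1}) + {\rm Re}(\lambda_1 - \lambda_2)$, this sum is bounded by $C \cdot \max(|\nu|_F^{-A}, 1) \cdot (1 + |{\rm ord}_F(\nu)|)$; combined with the prefactor $|\nu|_F^{{\rm wt}(\chi_1) + {\rm Re}(\lambda_1) + 1/2}$ and the identity $\min({\rm wt}(\chi_1) + {\rm Re}(\lambda_1),\,{\rm wt}(\chi_2) + {\rm Re}(\lambda_2)) = l(\pi_\lambda)$, this yields a bound of the shape $C|\nu|_F^{l(\pi_\lambda) + 1/2}(1 + |{\rm ord}_F(\nu)|)$. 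Finally, $1 + |{\rm ord}_F(\nu)| \leq C_\epsilon |\nu|_F^{-\epsilon}$ on the bounded support of $\nu$ gives the claimed estimate.

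The main obstacle, and the source of the $\epsilon$-loss, is the degenerate case $A = 0$ in the principal series analysis, where the geometric $t$-sum acquires a logarithmic factor in $|\nu|_F$ that must be absorbed. Uniformity in $\lambda$ is secured because $\Omega$, the sup-norm of $\varphi_k$, and hence $n$ are all independent of $\lambda$, while $A$ and the prefactor exponent vary continuously in $\lambda$, so the resulting constants stay uniform on compact subsets of $\mathcal{D}(\pi)$.
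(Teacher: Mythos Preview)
Your proposal is correct and follows essentially the same approach as the paper: reduce to the canonical families (\ref{E:holomorphic family DS}) and (\ref{E:holomorphic family PS}), invoke the known single-function estimate in the essentially square-integrable case, and in the principal series case bound the Weil-representation integral directly via the compact support of the Schwartz function, obtaining a geometric/logarithmic $t$-sum and absorbing the degeneracy at $A=0$ into the $|\nu|_F^{-\epsilon}$ loss. The only cosmetic difference is that the paper evaluates the $t$-sum exactly and splits it into two terms with exponents $s_1+\Re(\lambda_1)+1/2$ and $s_2+\Re(\lambda_2)+1/2$, whereas you carry the factor $(1+|\mathrm{ord}_F(\nu)|)$ throughout; both lead to the same conclusion.
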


\begin{proof}
Since we only consider the convergence for $\lambda$ varying in a compact set, it suffices to consider holomorphic families of the form (\ref{E:holomorphic family DS}) or (\ref{E:holomorphic family PS}).
We assume $\pi = {\rm Ind}_{B(F)}^{\GL_2(F)}(\chi_1 \otimes \chi_2)$ is a principal series representation. The other case follows from the asymptotic estimate for a single Whittaker function in \cite[Lemma 14.3]{JLbook2}. By the formulae defining $\omega_\psi$, there exist an integer $n$ and a constant $C>0$ such that
\[
|\omega_\psi({\bf a}(\nu)k)(x,y)| \leq C\cdot \mathbb{I}_{\varpi_F^{-n}\o_F \times \varpi_F^{-n}\o_F}(\nu x ,y)
\]
for $(x,y)\in F^2$, $\nu \in F^\times$, and $k \in \GL_2(\o_F)$. Let 
\[
s_1 = {\rm wt}(\chi_1),\quad s_2 = {\rm wt}(\chi_2).
\] 
Then we have
\begin{align*}
|W(\varphi,\lambda)({\bf a}(\nu)k)| &\leq C \cdot|\nu|^{s_1+{\rm Re}(\lambda_1)+1/2} \int_{F^\times} \mathbb{I}_{\varpi_F^{-n}\o_F \times \varpi_F^{-n}\o_F}(\nu t ,t^{-1})|t|_F^{s_1-s_2+{\rm Re}(\lambda_1-\lambda_2)}\,d^\times t\\
&= C \cdot\mathbb{I}_{\varpi^{-2n}\o_F}(\nu)|\nu|_F^{s_1+{\rm Re}(\lambda_1)+1/2} \sum_{m=-n-{\rm ord}_F(\nu)}^{n}q_F^{-s_1+s_2-{\rm Re}(\lambda_1-\lambda_2)}\\
&= C \cdot \mathbb{I}_{\varpi^{-2n}\o_F}(\nu) \left[ C_\lambda^{(1)}\cdot |\nu|_F^{s_1+{\rm Re}(\lambda_1)+1/2} + C_\lambda^{(2)}(\nu)\cdot |\nu|_F^{s_2+{\rm Re}(\lambda_2)+1/2} \right].
\end{align*}
for $\nu \in F^\times$ and $k \in \GL_2(\o_F)$. Here
\begin{align*}
C_\lambda^{(1)} & = \sum_{m=-n}^{n}q_F^{-s_1+s_2-{\rm Re}(\lambda_1-\lambda_2)},\\
C_\lambda^{(2)}(\nu) & =\begin{cases}
 \displaystyle{q_F^{n(s_1-s_2+{\rm Re}(\lambda_1-\lambda_2))}\left(\frac{1-|\nu|_F^{s_1-s_2+{\rm Re}(\lambda_1-\lambda_2)}}{1-q_F^{-s_1+s_2-{\rm Re}(\lambda_1-\lambda_2)}}\right) }& \mbox{ if $s_1+{\rm Re}(\lambda_1) \neq s_2 + {\rm Re}(\lambda_2)$},\\
 {\rm ord}_F(\nu) & \mbox{ if $s_1+{\rm Re}(\lambda_1) = s_2 + {\rm Re}(\lambda_2)$}.
 \end{cases}
\end{align*}
It is cleat that $C_\lambda^{(1)}$ is bounded uniformly as $\lambda$ varies in a compact set and there exists a constant $C_{\lambda,\epsilon}^{(2)}>0$ bounded uniformly as $\lambda$ varies in a compact set such that
\[
|C_{\lambda}^{(2)}(\nu)| \leq C_{\lambda,\epsilon}^{(2)} \cdot |\nu|_F^{-\epsilon}
\]
for $\nu \in \varpi_F^{-2n}\o_F$. This completes the proof.
\end{proof}

Let $E$ be a finite \'etale algebra of degree $d$ over $F$. Let $\psi_E$ the additive character of $E$ defined by $\psi_E = \psi \circ {\rm tr}_{E/F}$. We denote by $\mathcal{W}(\Pi,\psi_E)$ the Whittaker model of $\Pi$ with respect to $\psi_E$. Let $\Pi$ be an irreducible generic admissible representation of $\GL_2(E)$ with central character $\omega_\Pi$. Assume $E= F_1 \times \cdots \times F_r$ for some finite extension $F_i$ of degree $d_i$ over $F$. Then
\[
\Pi = \pi_1 \times \cdots \times \pi_r
\]
for some irreducible generic admissible representation $\pi_i$ of $\GL_2(F_i)$. Define $L(\Pi) \in \R$ by
\begin{align}\label{E:parameter}
L(\Pi) = \sum_{i=1}^r d_i\cdot l(\pi_i).
\end{align}
Let $\mathcal{D}(\Pi)$ be the domain associated to $\Pi$ defined by
\begin{align}\label{E:domain}
\mathcal{D}(\Pi) = \mathcal{D}(\pi_1)\times\cdots\times\mathcal{D}(\pi_r).
\end{align}
For $\lambda = (\lambda_1,\cdots,\lambda_r) \in \mathcal{D}(\Pi)$, define
\begin{align}\label{E:holomorphic family}
|\lambda|_\Pi  = \sum_{i=1}^{r}d_i \cdot |\lambda_i|_{\pi_i},\quad
\Pi_\lambda = (\pi_1)_{\lambda_1} \times \cdots \times (\pi_r)_{\lambda_r}.
\end{align}
Note that by definition we have
\[
|\lambda|_\Pi \geq \max\{|L(\Pi_\lambda)|,|L(\Pi_\lambda^\vee)|\}
\]
and $\Pi_\lambda$ is tempered if and only if $|\lambda|_\Pi=0$. Let
\begin{align}\label{E:imaginary domain}
\mathcal{D}(\Pi)^\circ = \{\lambda \in \mathcal{D}(\Pi)\,\vert\, |\lambda|_\Pi=0\}.
\end{align}
Similar to the case $E=F$, we have the notion of holomorphic families of Whittaker functions of $\Pi_\lambda$ with respect to $\psi_E$.

\begin{lemma}\label{L:analytic family 2}
Assume $F$ is non-archimedean. Let $W_\lambda$ be a holomorphic family of Whittaker functions of $\Pi_\lambda$ with respect to $\psi_E$. Let $\epsilon>0$. There exist an integer $n$ independent of $\epsilon,\lambda$ and a constant $C_{\lambda,\epsilon}>0$ bounded uniformly as $\lambda$ varies in a compact set such that
\[
|W_\lambda({\bf a}(\nu){\bf m}( t)k)| \leq C_{\lambda,\epsilon}\cdot \mathbb{I}_{\varpi_F^{-n}\o_F \times\cdots\times \varpi_F^{-n}\o_F}((\nu_1,\cdots,\nu_r))\prod_{i=1}^r|\nu_i|_F^{d_i \cdot l((\pi_i)_{\lambda_i})+d_i/2-\epsilon}
\]
for $\nu =  (\nu_1,\cdots,\nu_r)\in (F^\times)^r$, $t \in \mathcal{C}$, and $k \in \GL_2(\o_E)$. Here $\mathcal{C}$ is a complete set of coset representatives for $(F_1^\times \times \cdots \times F_r^\times) /( F^\times )^r$.
\end{lemma}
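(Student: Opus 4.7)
The plan is to reduce Lemma \ref{L:analytic family 2} to the one-variable Lemma \ref{L:analytic family} using the factorization $E = F_1 \times \cdots \times F_r$. Since $E$ is \'etale over $F$ we have $\GL_2(E) = \prod_i \GL_2(F_i)$ and $\GL_2(\o_E) = \prod_i \GL_2(\o_{F_i})$, and writing $\Pi = \pi_1 \times \cdots \times \pi_r$ the Whittaker model factorizes as $\mathcal{W}(\Pi_\lambda,\psi_E) = \bigotimes_i \mathcal{W}((\pi_i)_{\lambda_i},\psi_{F_i})$, where $\psi_{F_i} = \psi \circ \operatorname{tr}_{F_i/F}$. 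As in the remark preceding Lemma \ref{L:analytic family}, any holomorphic family of Whittaker functions can, in a neighborhood of any $\lambda_0 \in \mathcal{D}(\Pi)$, be written as a finite linear combination with holomorphic coefficients in $\lambda$ of tensor products $W_\lambda(g) = \prod_i W_{i,\lambda_i}(g_i)$ of holomorphic families on the factors. Such linear combinations preserve the form of the desired estimate, so I reduce to the factorizable case.

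I then take $\mathcal{C} = \prod_i \mathcal{C}_i$, where $\mathcal{C}_i \subset F_i^\times$ is a set of coset representatives of $F_i^\times / F^\times$ chosen so that $0 \leq \operatorname{ord}_{F_i}(t_i) \leq e_i - 1$ for every $t_i \in \mathcal{C}_i$, with $e_i$ the ramification index of $F_i/F$. This ensures that $|t_i|_{F_i}$ takes only finitely many values, uniformly bounded above by $1$ and below by $q_{F_i}^{-(e_i-1)}$, even though $\mathcal{C}_i$ itself is infinite due to the unit cosets $\o_{F_i}^\times / \o_F^\times$. The identity ${\bf a}(\nu_i){\bf m}(t_i) = {\bf a}(\nu_i t_i^2) \cdot t_i^{-1} I$ and the Whittaker transformation under the center give
\[
W_{i,\lambda_i}({\bf a}(\nu_i){\bf m}(t_i)k_i) = \omega_{(\pi_i)_{\lambda_i}}(t_i)^{-1}\,W_{i,\lambda_i}({\bf a}(\nu_i t_i^2)k_i),
\]
and Lemma \ref{L:analytic family} applied with parameter $\epsilon' := \epsilon/\max_j d_j$ produces integers $n_i$ and constants $C_{\lambda_i,\epsilon'} > 0$, bounded uniformly as $\lambda_i$ varies in a compact set, such that
\[
|W_{i,\lambda_i}({\bf a}(\nu_i t_i^2)k_i)| \leq C_{\lambda_i,\epsilon'}\cdot\mathbb{I}_{\varpi_{F_i}^{-n_i}\o_{F_i}}(\nu_i t_i^2)\cdot|\nu_i t_i^2|_{F_i}^{l((\pi_i)_{\lambda_i})+1/2-\epsilon'}.
\]

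To assemble the final estimate I use $|\nu_i|_{F_i} = |\nu_i|_F^{d_i}$ for $\nu_i \in F^\times$ together with $|\omega_{(\pi_i)_{\lambda_i}}(t_i)^{-1}| = |t_i|_{F_i}^{-{\rm wt}(\omega_{(\pi_i)_{\lambda_i}})}$; the boundedness of $|t_i|_{F_i}$ on $\mathcal{C}_i$ absorbs all $t_i$-dependence (including the factor $|t_i|_{F_i}^{2(l((\pi_i)_{\lambda_i})+1/2-\epsilon')}$) into a constant that remains uniformly bounded as $\lambda$ varies in a compact set. The support condition $\nu_i t_i^2 \in \varpi_{F_i}^{-n_i}\o_{F_i}$ combined with $0 \leq \operatorname{ord}_{F_i}(t_i) \leq e_i - 1$ forces $\nu_i \in \varpi_F^{-n}\o_F$ for $n := \max_i \lceil (n_i + 2(e_i-1))/e_i \rceil$, which is independent of $\epsilon$ and $\lambda$. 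Finally, the choice $d_i\epsilon' \leq \epsilon$ lets me apply the elementary inequality $|\nu_i|_F^{d_i l((\pi_i)_{\lambda_i})+d_i/2-d_i\epsilon'} \leq q_F^{n(\epsilon-d_i\epsilon')}\cdot|\nu_i|_F^{d_i l((\pi_i)_{\lambda_i})+d_i/2-\epsilon}$ on $\varpi_F^{-n}\o_F$ to convert the exponent into the required form; multiplying the $r$ factor-level estimates gives the claimed bound. I expect the main subtlety to be precisely the uniformity over $t \in \mathcal{C}$: since $\mathcal{C}_i$ contains infinitely many cosets represented by units of $\o_{F_i}^\times$, the specific choice of representatives above is essential, because otherwise $|t_i|_{F_i}$ could be arbitrary and the absorbed factors would fail to be uniformly bounded.
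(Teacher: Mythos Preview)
Your proof is correct and takes the same approach as the paper: factorize over $E=\prod_i F_i$, apply Lemma~\ref{L:analytic family} to each $\GL_2(F_i)$-factor, and use the compactness of $E^\times/(F^\times)^r$ to absorb the $t$-dependence into the constant. The paper's proof is a single sentence invoking exactly these two ingredients; you have supplied the details it omits, and your explicit choice of representatives with bounded $F_i$-valuation is precisely how the compactness of the quotient is cashed out in practice.
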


\begin{proof}
The assertion follows directly from Lemma \ref{L:analytic family} and the fact that $(F_1^\times \times \cdots \times F_r^\times) /( F^\times )^r$ is compact.
\end{proof}

\section{Asai cube factors via the Weil$-$Deligne representations}\label{S:Asai cubic Galois}

Let $F$ be a local field of characteristic zero and $\psi$ a non-trivial additive character of $F$. Let $W_F'$ be the Weil$-$Deligne group of $F$. We identify characters of $F^\times$ with one-dimensional admissible representations of $W_F'$ by local class field theory. 

Let $F'$ be a finite extension of degree $d$ over $F$. We identify the Langlands dual group ${}^L({\rm R}_{F'/F}\GL_2)$ of ${\rm R}_{F'/F}\GL_2$ with $\GL_2(\C)^d \rtimes \Gal(\overline{F}/F)$ (cf.\,\cite[\S\,5]{Borel1979}), where the action of $\Gal(\overline{F}/F)$ on $\GL_2(\C)^d$ is the permutation of components induced by the natural homomorphism $\Gal(\overline{F}/F) \rightarrow \Gal(F'/F)$. Let ${\rm As}$ be the Asai representation of ${}^L({\rm R}_{F'/F}\GL_2)$ on $(\C^2)^{\otimes d} = \C^2 \otimes \cdots \otimes \C^2$ so that the restriction of ${\rm As}$ to $\GL_2(\C)^d$ is defined by
\[
{\rm As}(g_1,\cdots,g_d) \cdot (v_1 \otimes \cdots\otimes v_d) = (g_1\cdot v_1,\cdots,g_d \cdot v_d)
\]
and the action of $\Gal(\overline{F}/F)$ on $(\C^2)^{\otimes d}$ is the permutation of components induced by the natural  homomorphism $\Gal(\overline{F}/F) \rightarrow \Gal(F'/F)$.
Let $\pi$ be an irreducible admissible representation of $({\rm R}_{F'/F}\GL_2)(F) = \GL_2(F')$ with central character $\omega_\pi$. Denote by $
\phi_\pi : W_F' \rightarrow ^L({\rm R}_{F'/F}\GL_2)$ the $L$-parameter associated to $\pi$ via the local Langlands correspondence. Then we have a $2^d$-dimensional admissible representation
\[
{\rm As}\circ \phi_\pi : W_F' \longrightarrow \GL((\C^2)^{\otimes d}).
\]

Assume $d=2$ and $\pi = {\rm Ind}_{B(F')}^{\GL_2(F')}(\chi_1\otimes \chi_2)$ for some characters $\chi_1$ and $\chi_2$ of $(F')^\times$. Then
\begin{align}\label{E:Asai quadratic}
{\rm As}\circ \phi_\pi = \chi_1 \vert_{F^\times} \oplus \chi_2 \vert_{F^\times} \oplus {\rm Ind}_{W_{F'}'}^{W_F'}(\chi_1\chi_2^\sigma),
\end{align}
where $\sigma$ is the generator of $\Gal(F'/F)$ and $\chi_2^{\sigma}(a) = \chi_2(\sigma(a))$. 

Assume $d=3$ and $\pi = {\rm Ind}_{B(F')}^{\GL_2(F')}(\chi_1\otimes \chi_2)$ for some characters $\chi_1$ and $\chi_2$ of $(F')^\times$. When $F'$ is Galois over $F$, we have
\begin{align}\label{E:Asai cubic 1}
{\rm As}\circ \phi_\pi = \chi_1 \vert_{F^\times} \oplus \chi_2 \vert_{F^\times} \oplus {\rm Ind}_{W_{F'}'}^{W_F'}(\chi_1\chi_1^\sigma\chi_2^{\sigma^2} \oplus \chi_1\chi_2^\sigma\chi_2^{\sigma^2}),
\end{align}
where $\sigma$ is a generator of $\Gal(F'/F)$ and $\chi_i^{\sigma^j}(a) = \chi_i(\sigma^j(a))$. When $F'$ is not Galois over $F$, we have
\begin{align}\label{E:Asai cubic 2}
{\rm As}\circ \phi_\pi = \chi_1 \vert_{F^\times} \oplus \chi_2 \vert_{F^\times} \oplus {\rm Ind}_{W_{F'}'}^{W_F'}(\chi_1\chi_2^{-1}(\chi_2\circ{\rm N}_{F'/F}) \oplus \chi_2\chi_1^{-1}(\chi_1\circ{\rm N}_{F'/F})).
\end{align}

Let $E$ be a finite \'etale algebra of degree $d$ over $F$. Let $\psi_E$ the additive character of $E$ defined by $\psi_E = \psi \circ {\rm tr}_{E/F}$. Let $\Pi$ be an irreducible admissible representation of $\GL_2(E)$ with central character $\omega_\Pi$. Assume $E= F_1 \times \cdots \times F_r$ for some finite extension $F_i$ of degree $d_i$ over $F$. Then
\[
\Pi = \pi_1 \times \cdots \times \pi_r
\]
for some irreducible generic admissible representation $\pi_i$ of $\GL_2(F_i)$. Let
\[
({\rm As}\circ \phi_{\pi_1}) \otimes \cdots \otimes ({\rm As}\circ \phi_{\pi_r})
\]
be the admissible representation of $W_F'$ obtained by composing $({\rm As}\circ \phi_{\pi_1})\times \cdots \times({\rm As}\circ \phi_{\pi_r})$ with the tensor representation
\[
\GL((\C^2)^{\otimes d_1})\times\cdots\times\GL((\C^2)^{\otimes d_r}) \longrightarrow \GL((\C^2)^{\otimes d}).
\]
We denote by
\[
L(s,{\rm As}\,\Pi),\quad \varepsilon(s,{\rm As}\,\Pi,\psi) 
\]
the $L$-factor and $\varepsilon$-factor associated to the admissible representation $({\rm As}\circ \phi_{\pi_1}) \otimes \cdots \otimes ({\rm As}\circ \phi_{\pi_r})$ defined as in \cite[\S\,3]{Tate1979}. Let
\[
\gamma(s,{\rm As}\,\Pi,\psi) = \varepsilon(s,{\rm As}\,\Pi,\psi) L(s,{\rm As}\,\Pi)L(1-s,{\rm As}\,\Pi^\vee)
\]
be the associated $\gamma$-factor. Since $({\rm As}\circ \phi_{\pi_1}) \otimes \cdots \otimes ({\rm As}\circ \phi_{\pi_r})$ has determinant $(\omega_\Pi \vert_{F^\times})^{2^{d-1}}$ and dimension $2^d$, we have
\begin{align}\label{E:basic properties WD}
\varepsilon(s,{\rm As}\,\Pi,\psi^a) = \omega_\Pi(a)^{2^{d-1}}|a|_F^{2^d(s-1/2)} \varepsilon(s,{\rm As}\,\Pi,\psi)
\end{align}
for all $a \in F^\times$. 

\begin{lemma}\label{L:convergence WD}
Let $E$ be an \'etale cubic algebra over $F$ and $\Pi$ an irreducible generic admissible representation of $\GL_2(E)$. Then the $L$-factor $L(s,{\rm As}\,\Pi)$ has no poles for 
\[
{\rm Re}(s) > - L(\Pi).
\]
\end{lemma}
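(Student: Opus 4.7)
My plan is to decompose ${\rm As}\circ\phi_\Pi$ into pieces of the form (character)$\,\otimes\, V(j)$, where $V(j)$ denotes the irreducible $(j+1)$-dimensional representation of $\SL_2(\C)$, and then bound the pole of the $L$-factor of each piece directly.

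To treat all types of $\pi_i$ uniformly, for each local component $\pi_i$ of $\Pi=\pi_1\times\cdots\times\pi_r$ I would write $\phi_{\pi_i}=\sigma_{\pi_i}\otimes V(k_{\pi_i})$, where $\sigma_{\pi_i}$ is a Frobenius-semisimple Weil representation of $W_{F_i}'$ and $k_{\pi_i}\in\{0,1\}$, with $k_{\pi_i}=1$ if and only if $\pi_i$ is essentially square-integrable of Steinberg type. Under this normalization, every Frobenius eigenvalue of $\sigma_{\pi_i}$ on the inertia-invariants has absolute value at most $q_{F_i}^{-l(\pi_i)}$, either directly (principal-series or Steinberg type) or via the inductivity of $L$-factors from a quadratic extension (in the supercuspidal dihedral case).

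In each of the three cases for $E$, I would combine the Asai decompositions (\ref{E:Asai quadratic}), (\ref{E:Asai cubic 1}), (\ref{E:Asai cubic 2}) on the principal-series pieces of $\sigma_{\pi_i}$ with the tensor factorization $\phi_{\pi_i}=\sigma_{\pi_i}\otimes V(k_{\pi_i})$ and a Clebsch-Gordan decomposition of the $\bigotimes_i V(k_{\pi_i})$ factor to express
\[
{\rm As}\circ\phi_\Pi \;\cong\; \bigoplus_j \rho_j\otimes V(j),
\]
so that
\[
L(s,{\rm As}\,\Pi) \;=\; \prod_j L(s+j/2,\rho_j).
\]
Here each $\rho_j$ is a direct sum of characters of $F^\times$ and of characters of $(F_i)^\times$, $(F')^\times$, or $E^\times$ induced to $W_F'$. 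By inductivity of $L$-factors, $L(s+j/2,\rho_j)$ factors as the product of the character $L$-factors in their own field normalizations; a direct weight computation using $L(\Pi)=\sum_i d_i\,l(\pi_i)$ together with the trivial inequality $\sum a_i\,{\rm wt}_i(\cdot)\geq\sum a_i\,l(\pi_i)$ for the exponents $a_i\geq 0$ appearing in the tensor formulas shows that every such character has weight at least $L(\Pi)$. Consequently each pole of $L(s,{\rm As}\,\Pi)$ satisfies ${\rm Re}(s)\leq -L(\Pi)-j/2\leq -L(\Pi)$.

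The main obstacle is the bookkeeping, particularly in Case 3 with $E$ a cubic field and some component essentially square-integrable, where the explicit formulas (\ref{E:Asai cubic 1}) and (\ref{E:Asai cubic 2}) apply only to principal series; the $V(k_{\pi_i})$-factorization is compatible with the Asai construction because Asai is a tensor operation on the $L$-group, so one can transport the formulas to the Steinberg-type case at the cost of tracking the $V(k)$-shifts via Clebsch-Gordan. Once this case check is carried out, the bound ${\rm Re}(s)\leq -L(\Pi)$ follows, and it is tight precisely when $\Pi$ is a tempered principal-series representation with all components sharing the extremal weight.
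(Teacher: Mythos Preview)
Your plan is more elaborate than the paper's and has a genuine gap in the supercuspidal case. The paper splits cleanly into two subcases (illustrated for $E$ a field): if $\Pi$ is essentially square-integrable, twist by $|\cdot|_E^{-{\rm wt}(\omega_\Pi)/2}$ to make it square-integrable, observe that the Asai $L$-factor of a tempered parameter has no poles in ${\rm Re}(s)>0$ (since tensor induction of a bounded Weil--Deligne representation is bounded), and untwist; if $\Pi$ is principal series, plug into (\ref{E:Asai cubic 1}) or (\ref{E:Asai cubic 2}) and read off the weights directly. No Clebsch--Gordan bookkeeping is needed.

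Your decomposition $\phi_{\pi_i}=\sigma_{\pi_i}\otimes V(k_{\pi_i})$ with $k_{\pi_i}\in\{0,1\}$ and the subsequent appeal to the explicit formulas (\ref{E:Asai quadratic})--(\ref{E:Asai cubic 2}) presuppose that each $\sigma_{\pi_i}$ is a sum of characters. That is true for principal series and for twists of Steinberg, but fails when $\pi_i$ is supercuspidal: then $k_{\pi_i}=0$ and $\sigma_{\pi_i}$ is an irreducible two-dimensional Weil representation, so there are no ``principal-series pieces'' to which those formulas apply. You explicitly address only the Steinberg-type case via the $V(k)$ shift, and your remark about the ``supercuspidal dihedral case'' concerns the Frobenius eigenvalues of $\sigma_{\pi_i}$ itself (which are irrelevant, as its inertia invariants vanish), not those of ${\rm As}\circ\phi_\Pi$; moreover, in residue characteristic $2$ there are non-dihedral supercuspidals for which no such inductive description exists. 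To close the gap you would need exactly the temperedness argument the paper uses: after twisting $\pi_i$ to be square-integrable, its parameter is bounded, hence so is any tensor construction built from it, and the $L$-factor bound follows. Once you invoke that, the explicit decomposition machinery becomes unnecessary.
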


\begin{proof} 
We assume $F$ is non-archimedean and $E$ is a field. The other cases can be proved in a similar way and we omit it. When $\Pi$ is essentially square-integrable, the representation $\Pi \otimes |\mbox{ }|_E^{-{\rm wt}(\omega_\Pi)/2}$ is square-integrable. Thus the $L$-factor $L(s,{\rm As}\,(\Pi \otimes |\mbox{ }|_E^{-{\rm wt}(\omega_\Pi)/2}))$ has no poles for ${\rm Re}(s)>0$. Since ${\rm As}\circ (\phi_{\Pi} \otimes |\mbox{ }|_E^{-{\rm wt}(\omega_\Pi)/2}) = ({\rm As}\circ \phi_\Pi)\otimes |\mbox{ }|_F^{-3{\rm wt}(\omega_\Pi)/2}$, we deduce that $L(s,{\rm As}\,\Pi)$ has no poles for 
\[
{\rm Re}(s)>-3{\rm wt}(\omega_\Pi)/2=-L(\Pi).
\]
When $\Pi = {\rm Ind}_{B(E)}^{\GL_2(E)}(\chi_1\otimes\chi_2)$, the $L$-factor $L(s,{\rm As}\,\Pi)$ is equal to
\[
L(s,\chi_1 \vert_{F^\times})L(s,\chi_2 \vert_{F^\times})L(s,\chi_1\chi_1^\sigma\chi_2^{\sigma^2})L(s,\chi_1\chi_2^\sigma\chi_2^{\sigma^2})
\]
or
\[
L(s,\chi_1 \vert_{F^\times})L(s,\chi_2 \vert_{F^\times})
L(s,\chi_1\chi_2^{-1}(\chi_2\circ{\rm N}_{F'/F}))L(s,\chi_1^{-1}\chi_2(\chi_1\circ{\rm N}_{F'/F})),
\]
depending on whether $E/F$ is Galois or not. In any case, we see that $L(s,{\rm As}\,\Pi)$ has no poles for 
\begin{align*}
{\rm Re}(s)&>-\min\{ 3{\rm wt}(\chi_1),3{\rm wt}(\chi_2),{\rm wt}(\chi_1)+2{\rm wt}(\chi_2), 2{\rm wt}(\chi_1)+{\rm wt}(\chi_2)\}\\
&=-\min\{ 3{\rm wt}(\chi_1),3{\rm wt}(\chi_2)\}\\
&=-L(\Pi).
\end{align*}
This completes the proof.
\end{proof}

\section{Asai cube factors via the local zeta integrals}\label{S:Asai cubic PSR}
\subsection{Preliminaries}
Recall the similitude symplectic group 
\[
\GSp_6 = \left \{ g \in \GL_{6}\mbox{ } \left \vert \mbox{ } g\bp 0 & {\bf 1}_3 \\ -{\bf 1}_3 & 0 \ep {}^tg = \nu(g) \bp 0 & {\bf 1}_3 \\ -{\bf 1}_3 & 0 \ep ,\mbox{ }\nu(g) \in \mathbb{G}_m    \right .\right \}
\]
and its standard Siegel parabolic subgroup
\[
P= \left \{ \left .  \bp \nu{}^tA^{-1} & * \\ 0 & A\ep \in \GSp_6 \mbox{ } \right\vert \mbox{ }A \in \GL_3, \,\nu \in {\mathbb G}_m\right \}.
\]
Denote by $U$ the unipotent radical of $P$.

Let $F$ be a field of characteristic zero and $E$ an \'etale cubic algebra over $F$. Let $\mathbb K$ be the quadratic discriminant algebra of $E$.
Let
\[
\G = \{g \in {\rm R}_{E/F}\GL_{2} \, \vert\, \det(g) \in \mathbb{G}_m\}
\]
be a linear algebraic group over $F$. Let $(V,\<\, ,\,\>)$ be the nondegenerate symplectic form over $F$ defined by  
\[
V=({\rm R}_{E/F}{\mathbb G}_a)^2,\quad\<x,y\> = {\rm tr}_{E/F}(x_1y_2-x_2y_1)
\]
for $x = (x_1,x_2) , y= (y_1,y_2) \in V$. 
Let
\[
\GSp(V) = \{g \in {\rm R}_{E/F}\GL_{2} \, \vert\, \<xg,yg\> = \nu(g)\<x,y\> \mbox{ for all }x,y \in V,\, \nu(g) \in \mathbb{G}_m\}
\]
be the similitude symplectic group associated to $\<\, ,\,\>$. Then it is easy to verify that $\G$ is a subgroup of $\GSp(V)$ and $\det(g) = \nu(g)$ for $g \in \G$.

Let $X$ and $X_0$ be two maximal isotropic subspaces of $V$ defined by
\[
X = \{(0,y) \in V \,\vert\, y \in {\rm R}_{E/F}{\mathbb G}_a\},\quad X_0 = \{ (x,y) \in V \, \vert\, x \in \mathbb{G}_a,\, {\rm tr}_{E/F}(y)=0 \}.
\]
Define an isomorphism between $X(F)$ and $X_0(F)$ by
\begin{align}\label{E:iso}
\begin{split}
X(F) \longrightarrow X_0(F),\quad (0,x)\longmapsto (3x,0),\quad
(0,y)\longmapsto (0,y)
\end{split}
\end{align}
for $x \in F$ and $y \in E$ with ${\rm tr}_{E/F}(y)=0$. Fix $\eta \in \GSp(V)(F)$ such that 
\begin{align}\label{E:eta}
\begin{split}
&X(F)\cdot\eta=X_0(F),\\
&\nu(\eta) = 1\mbox{ and }\det(\eta \vert_{X(F)})=1 \mbox{ with respect to the isomorphism (\ref{E:iso})}.
\end{split}
\end{align}
We denote by $P_0$ and $R_0$ the stabilizers of $X_0$ in $\GSp(V)$ and $\G$, respectively. Let $U_0$ be the unipotent radical of $R_0$. Note that
\begin{align*}
R_0 &= \{ {\bf a}(t_1){\bf d}(t_2){\bf n}(x) \, \vert \, t_1,t_2 \in \mathbb{G}_m,\, x \in {\rm R}_{E/F}\mathbb{G}_a,\, {\rm tr}_{E/F}(x)=0\},\\
U_0 &=  \{ {\bf n}(x) \, \vert \, x \in {\rm R}_{E/F}\mathbb{G}_a ,\, {\rm tr}_{E/F}(x)=0\}. 
\end{align*}

A symplectic basis of $V(F)$ is an ordered basis $\alpha = \{e_1^*,e_2^*,e_3^*,e_1,e_2,e_3\}$ of $V(F)$ such that
\[
\<e_i^*,e_j\>=\delta_{ij}
\] 
and $\{e_1,e_2,e_3\}$ is an ordered basis of $X(F)$. We write $\alpha_{X} = \{e_1,e_2,e_3\}$.
With respect to a symplectic basis $\alpha$ of $V(F)$, we identify $V(F)$ with the space of row vectors $F^6$. The identification induces an isomorphism 
\begin{align}\label{E:iota}
\iota_\alpha : \GSp(V)(F) \longrightarrow \GSp_6(F).
\end{align}
Let $\beta$ be another symplectic basis of $V(F)$ and $A_{\alpha,\beta} \in \GL_3(F)$ be the transition matrix from $\alpha_{X}$ to $\beta_{X}$. We recall that the transition matrix $A_{\alpha,\beta} = (a_{ij})_{1\leq i,j \leq 3}$ is defined so that 
\[
e_i' = a_{i1}\cdot e_1 + a_{i2}\cdot e_2 + a_{i3}\cdot e_3,
\]
where $\beta_{X} = \{e_1',e_2',e_3'\}$.
Then we have
\begin{align}\label{E:transition}
\iota_\beta(g) = p_{\alpha,\beta}\cdot \iota_\alpha(g)\cdot p_{\alpha,\beta}^{-1}
\end{align}
for $g \in \GSp(V)(F)$, where $p_{\alpha,\beta} \in P$ is of the form
\[
p_{\alpha,\beta} = \bp {}^tA_{\alpha,\beta}^{-1}  & * \\ 0 &A_{\alpha,\beta}\ep.
\]

\subsection{Local zeta integrals and local factors}\label{SS:local factor}
Let $F$ be a local field of characteristic zero and $\psi$ be a non-trivial additive character of $F$. 
Let
\[
K = \left \{ \begin{array}{lll} \GSp_6(F)\cap\GL_{6}(\o_F)& \mbox{ if $F$ is non-archimedean},\\
\GSp_6(\R) \cap {\rm O}(6) & \mbox{ if $F=\R$},\\
\GSp_6(\C) \cap {\rm U}(6) & \mbox{ if $F=\C$},
\end{array} \right .
\]
be a maximal compact subgroup of $\GSp_6(F)$.

Let $\omega$ be a character of $F^\times$. For $s \in \C$, let $\chi_{\omega,s}$ be the character of $P(F)$ defined by
\[
\chi_{\omega,s}(p) = \omega(\nu\cdot \det(A)^{-1}) \cdot \delta_{P}(p)^{s/2-1/4}
\]
for $p = \bp \nu{}^tA^{-1} & * \\ 0 & A\ep$, where $\delta_{P}$ is the modulus character of $P$. Recall that $\delta_P(p) = |\nu^6\det(A)^{-4}|_F$.
Denote by $I(\omega,s)$ the degenerate principal series representation
\[
{\rm Ind}_{P(F)}^{\GSp_6(F)}(\chi_{\omega,s})
\]
and by $\rho$ the right translation action of $\GSp_6(F)$ on $I(\omega,s)$. Recall that $I(\omega,s)$ consisting of smooth functions $f : \GSp_6(F) \rightarrow \C$ such that 
\begin{itemize}
\item For $p \in P(F)$ and $g \in \GSp_6(F)$, 
\[
f(pg) = \chi_{\omega,s}(p)\delta_{P}(p)^{1/2}f(g).
\]
\item $f$ is right $K$-finite.
\end{itemize}
Let ${\rm w} \in \GSp_6(F)$ be the Weyl element defined by
\[
{\rm w} = \bp 0 & -J \\ J & 0\ep,
\]
where $J \in \GL_3$ is the anti-diagonal matrix with non-zero entries all equal to $1$. We define the intertwining operator 
\begin{align*}
M_{\rm w}(\omega)&:I(\omega,s) \longrightarrow I(\omega^{-1},1-s),\\
M_{\rm w}(\omega) f(g) &= \omega(\nu(g)) \int_{U(F)}f({\rm w}^{-1}ug)\,du.
\end{align*}
The integral is absolutely convergent for ${\rm Re}(s)$ sufficiently large and can be meromorphically continued to $s \in \C$. Let $M_{{\rm w},\psi}^*(\omega):I(\omega,s) \longrightarrow I(\omega^{-1},1-s)$ be the normalized intertwining operator defined by
\begin{align}\label{E:io}
M_{{\rm w},\psi}^*(\omega) = \gamma(2s-2,\omega,\psi)\gamma(4s-3,\omega^2,\psi) M_{\rm w}(\omega).
\end{align}
Then $M_{{\rm w},\psi}^*(\omega^{-1})\circ M_{{\rm w},\psi}^*(\omega)$ is a scalar multiple of the identity map on $I(\omega,s)$. We normalize the Haar measure on $U(F)$ so that 
\[
M_{{\rm w},\psi}^*(\omega^{-1})\circ M_{{\rm w},\psi}^*(\omega) = {\rm id}.
\]
We write $M_{{\rm w},\psi}^*(\omega) = M_{\rm w}^*$ if there is no cause of confusion. 
A map 
\[
\C \times \GSp_6(F) \longrightarrow \C, \quad (s,g) \longmapsto f_s(g)
\]
is a holomorphic section of $I(\omega,s)$ if it satisfies the following conditions:
\begin{itemize}
\item For each $s \in \C$, the function $g \mapsto f_s(g)$ belongs to $I(\omega,s)$.
\item For each $g \in \GSp_6(F)$, the function $s \mapsto f_s(g)$ is holomorphic.
\item $f_s$ is right $K$-finite.
\end{itemize}
A map 
\[
\C \times \GSp_6(F) \longrightarrow \C, \quad (s,g) \longmapsto f_s(g)
\]
is a good section of $I(\omega,s)$ if it satisfies the following conditions:
\begin{itemize}
\item The map $(s,g)\mapsto L(2s+1,\omega)^{-1}L(4s,\omega^2)^{-1}f_s(g)$ is a holomorphic section of $I(\omega,s)$.
\item The map $(s,g)\mapsto L(3-2s,\omega^{-1})^{-1}L(4-4s,\omega^{-2})^{-1}M_{\rm w}^*f_s(g)$ is a holomorphic section of $I(\omega^{-1},1-s)$.
\end{itemize}
By \cite[Lemma 1.3]{Ikeda1992}, every holomorphic section is a good section.

Let $\Pi$ be an irreducible generic admissible representation of $\GL_2(E)$ with central character $\omega_{\Pi}$. Write $\omega = \omega_{\Pi}\vert_{F^\times}$. Recall $\mathcal{W}(\Pi,\psi_E)$ is the space of Whittaker functions of $\Pi$ with respect to $\psi_E = \psi\circ{\rm tr}_{E/F}$. Let $f_s$ be a good section of $I(\omega,s)$ and $W \in \mathcal{W}(\Pi,\psi_E)$. Define the local zeta integral 
\[
Z_\alpha(f_s,W) = \int_{F^\times U_0(F) \backslash \G(F)}f_s(\iota_\alpha(\eta g))W(g)\,dg,
\]
where $\alpha$ is a symplectic basis of $V(F)$, $\iota_\alpha$ is the isomorphism (\ref{E:iota}), and $\eta \in \GSp(V)(F)$ satisfies (\ref{E:eta}). Note that the integral is independent of the choice of $\eta$. Indeed, if $\eta' \in \GSp(V)(F)$ also satisfies (\ref{E:eta}), then $\iota_\alpha(\eta'\eta^{-1}) \in P(F)$ and $\nu(\eta'\eta^{-1}) = \det(\eta'\vert_{X(F)}\eta\vert_{X(F)}^{-1}) =1$. 
By the results in \cite{PSR1987} and \cite{Ikeda1989}, the integral is absolutely convergent for ${\rm Re}(s)$ sufficiently large and admits meromorphic continuation to $s \in \C$.
Moreover, by \cite[Proposition 3.1]{PSR1987} and \cite[Proposition 4.2]{Ikeda1989}, there exists a unique meromorphic function $\gamma_{\rm PSR}(s,{\rm As}\,\Pi,\psi,\alpha)$, called the $\gamma$-factor, such that we have the functional equation
\begin{align}\label{E:fe}
Z_\alpha(M_{\rm w}^*f_s,W^\vee) = \gamma_{\rm PSR}(s,{\rm As}\,\Pi,\psi,\alpha) Z_\alpha(f_s,W).
\end{align}
Recall that $W^\vee \in \mathcal{W}(\Pi^\vee,\psi_E)$ is defined by
\[
W^\vee(g) = \omega_\Pi(\det(g))^{-1}W(g).
\]
By Lemma \ref{L:basic properties zeta}-(1) below, we see that $\gamma_{\rm PSR}(s,{\rm As}\,\Pi,\psi,\alpha) = \gamma_{\rm PSR}(s,{\rm As}\,\Pi,\psi,\beta)$ if $\alpha_X = \beta_X$. Therefore we also write $\gamma_{\rm PSR}(s,{\rm As}\,\Pi,\psi,\alpha_X) = \gamma_{\rm PSR}(s,{\rm As}\,\Pi,\psi,\alpha)$. In particular, identifying $E$ with $X(F)$ via the isomorphism
\[
E \longrightarrow X(F),\quad x \longmapsto (0,x),
\]
the notion $\gamma_{\rm PSR}(s,{\rm As}\,\Pi,\psi,\alpha)$ is defined for any basis $\alpha$ of $E$ over $F$.
\begin{lemma}\label{L:basic properties zeta}

(1) Let $\alpha, \beta$ be symplectic bases of $V(F)$. We have
\[
\gamma_{\rm PSR}(s,{\rm As}\,\Pi,\psi,\beta) = \omega(\det(A_{\alpha,\beta}))^{2}|\det(A_{\alpha,\beta})|_F^{4s-2}\gamma_{\rm PSR}(s,{\rm As}\,\Pi,\psi,\alpha).
\]
Here $A_{\alpha,\beta} \in \GL_3(F)$ is the transition matrix from $\alpha_{X}$ to $\beta_{X}$.

(2) Let $a \in F^\times$ and $\alpha$ be a symplectic basis of $V(F)$. We have
\[
\gamma_{\rm PSR}(s,{\rm As}\,\Pi,\psi^a,\alpha) = \omega(a)^4 |a|_F^{8s-4}\gamma_{\rm PSR}(s,{\rm As}\,\Pi,\psi,\alpha).
\]
\end{lemma}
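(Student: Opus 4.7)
\emph{Plan.} The strategy for both parts is to transform each side of the local functional equation (\ref{E:fe}) under the prescribed change of data, and read off the resulting ratio of $\gamma$-factors.

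\emph{Part (1).} By (\ref{E:transition}), $\iota_\beta(g)=p_{\alpha,\beta}\iota_\alpha(g)p_{\alpha,\beta}^{-1}$ with $p_{\alpha,\beta}\in P(F)$, $\nu(p_{\alpha,\beta})=1$ and $A$-part $A_{\alpha,\beta}$. The left equivariance of $f_s$ under $P(F)$ gives
\[
Z_\beta(f_s,W)=\chi_{\omega,s}(p_{\alpha,\beta})\delta_P(p_{\alpha,\beta})^{1/2}Z_\alpha\bigl(\rho(p_{\alpha,\beta}^{-1})f_s,W\bigr),
\]
and using that $M_{{\rm w},\psi}^{*}$ commutes with $\rho(p_{\alpha,\beta}^{-1})$ (because $\nu(p_{\alpha,\beta})=1$), the analogous identity holds with $M_{{\rm w},\psi}^{*}f_s$ in place of $f_s$ and $\chi_{\omega^{-1},1-s}$ replacing $\chi_{\omega,s}$. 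Dividing these and invoking the $\alpha$-functional equation at $\rho(p_{\alpha,\beta}^{-1})f_s$ yields
\[
\gamma_{\rm PSR}(s,{\rm As}\,\Pi,\psi,\beta)=\frac{\chi_{\omega^{-1},1-s}(p_{\alpha,\beta})}{\chi_{\omega,s}(p_{\alpha,\beta})}\,\gamma_{\rm PSR}(s,{\rm As}\,\Pi,\psi,\alpha),
\]
and a direct computation using $\delta_P(p_{\alpha,\beta})=|\det A_{\alpha,\beta}|_F^{-4}$ identifies the prefactor with $\omega(\det A_{\alpha,\beta})^{2}|\det A_{\alpha,\beta}|_F^{4s-2}$.

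\emph{Part (2).} Identify $\mathcal{W}(\Pi,\psi_E)\cong\mathcal{W}(\Pi,\psi_E^a)$ via $W\mapsto\tilde W$, $\tilde W(g)=W({\bf a}(a)g)$; a short check shows $\tilde W^\vee=\omega(a)\,\widetilde{W^\vee}$. Since ${\bf a}(a)\in R_0$ normalizes $F^\times U_0(F)$, the substitution $g\mapsto{\bf a}(a)^{-1}g$ in $Z_\alpha(f_s,\tilde W)$ is legitimate and places $\iota_\alpha(\eta{\bf a}(a)^{-1}\eta^{-1})$ to the left of $\iota_\alpha(\eta g)$ inside $f_s$. Because ${\bf a}(a)^{-1}$ stabilizes $X_0$ and $\eta$ carries $X$ to $X_0$, this conjugate stabilizes $X$, so it lies in the preimage of $P(F)$ under $\iota_\alpha$; using $\nu(\eta)=1$, $\det(\eta\vert_{X(F)})=1$ from (\ref{E:eta}) and $\det({\bf a}(a)^{-1}\vert_{X_0})=a^{-1}$, its similitude is $a^{-1}$ and its $A$-determinant is $a^{-1}$. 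Evaluating $\chi_{\omega,s}\delta_P^{1/2}$ (the $\omega$-factor disappears since $\nu\det(A)^{-1}=1$) returns $|a|_F^{-s-1/2}$, and an identical computation with $(\omega,s)\leftrightarrow(\omega^{-1},1-s)$ gives $|a|_F^{s-3/2}$. Hence
\[
Z_\alpha(f_s,\tilde W)=|a|_F^{-s-1/2}Z_\alpha(f_s,W),\quad Z_\alpha\bigl(M_{{\rm w},\psi}^{*}f_s,\widetilde{W^\vee}\bigr)=|a|_F^{s-3/2}Z_\alpha\bigl(M_{{\rm w},\psi}^{*}f_s,W^\vee\bigr).
\]
The normalized intertwining operator itself transforms via $\gamma(t,\chi,\psi^a)=\chi(a)|a|_F^{t-1/2}\gamma(t,\chi,\psi)$ combined with the $\psi$-dependent rescaling of the Haar measure on $U(F)$: the normalization $M_{{\rm w},\psi}^{*}(\omega^{-1})\circ M_{{\rm w},\psi}^{*}(\omega)={\rm id}$ forces $du_{\psi^a}=|a|_F^{3}du_\psi$ on the $6$-dimensional $U(F)$, so that
\[
M_{{\rm w},\psi^a}^{*}(\omega)=\omega(a)^{3}|a|_F^{6s-3}M_{{\rm w},\psi}^{*}(\omega).
\]
Substituting these four relations into the $\psi^a$-functional equation and comparing with the $\psi$-functional equation, the powers of $|a|_F$ and $\omega(a)$ combine to yield the claimed factor $\omega(a)^{4}|a|_F^{8s-4}$.

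\emph{Main obstacle.} The delicate point is the bookkeeping in (2): one must correctly incorporate the $\psi$-dependent rescaling of the Haar measure on $U(F)$ implicit in (\ref{E:io})---without the extra $|a|_F^{3}$ the exponent is off by $3$---and pin down the similitude and $A$-determinant of $\eta{\bf a}(a)^{-1}\eta^{-1}$ from the normalization of $\eta$ in (\ref{E:eta}) and the isomorphism (\ref{E:iso}). Once these are in hand, the identity reduces to a direct manipulation of characters and absolute values.
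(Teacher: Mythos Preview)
Your argument for Part~(1) is correct and coincides with the paper's proof: both compute $Z_\beta(f_s,W)$ and $Z_\beta(M_{\rm w}^*f_s,W^\vee)$ via (\ref{E:transition}) and take the ratio. Your parenthetical that $M_{\rm w}^*$ commutes with $\rho(p_{\alpha,\beta}^{-1})$ \emph{because} $\nu(p_{\alpha,\beta})=1$ is exactly right---the twist $\omega(\nu(g))$ in the definition of $M_{\rm w}$ makes this nontrivial in general.

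For Part~(2), your route differs from the paper's. The paper first uses Part~(1) to reduce to a specific basis $\alpha$, then verifies by an explicit matrix computation that $\iota_\alpha(\eta{\bf a}(a^{-1})\eta^{-1})={\rm diag}(1,a^{-1},a^{-1},a^{-1},1,1)$. You instead work with an arbitrary $\alpha$ and argue abstractly: since ${\bf a}(a)^{-1}$ stabilizes $X_0$ and $\eta$ carries $X$ to $X_0$, the conjugate $\eta{\bf a}(a)^{-1}\eta^{-1}$ stabilizes $X$, hence lands in $P(F)$ under $\iota_\alpha$, and its similitude and $A$-determinant are both $a^{-1}$ (indeed, the restriction to $X$ is conjugate via $\eta$ to ${\bf a}(a)^{-1}\vert_{X_0}$, so invoking $\det(\eta\vert_X)=1$ is not even needed). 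This is a clean alternative that avoids fixing coordinates; the paper's explicit computation, by contrast, makes the intermediate formulas concrete.

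One small slip: the substitution $g\mapsto{\bf a}(a)^{-1}g$ on $F^\times U_0(F)\backslash\G(F)$ does \emph{not} preserve the measure---conjugation by ${\bf a}(a)$ scales $U_0(F)\cong F^2$ by $a$, so there is a nontrivial Jacobian (the paper records a factor $|a|_F$ at this step). Your intermediate formula $Z_\alpha(f_s,\tilde W)=|a|_F^{-s-1/2}Z_\alpha(f_s,W)$ therefore differs from the paper's $|a|_F^{-s+1/2}$. This is harmless for the lemma: the identical Jacobian appears in both $Z_\alpha(f_s,\tilde W)$ and $Z_\alpha(M_{\rm w}^*f_s,\widetilde{W^\vee})$, hence cancels in the ratio that determines $\gamma_{\rm PSR}$. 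Once you note this cancellation explicitly, your proof is complete.
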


\begin{proof}
Let $f_s$ be a good section of $I(\omega,s)$ and $W \in \mathcal{W}(\Pi,\psi_E)$. First we prove assertion (1). By (\ref{E:transition}), 
\begin{align}\label{E:transition of integrals}
\begin{split}
Z_\beta(f_s,W) &= \int_{F^\times U_0(F) \backslash \G(F)}f_s(p_{\alpha,\beta}\cdot \iota_\alpha(\eta g)\cdot p_{\alpha,\beta}^{-1})W(g)\,dg\\
& = \omega(\det(A_{\alpha,\beta}))^{-1}|\det(A_{\alpha,\beta})|_F^{-2s-1} Z_\alpha(\rho(p_{\alpha,\beta}^{-1})f_s,W).
\end{split}
\end{align}
Similarly, we have
\[
Z_\beta(M_{\rm w}^*f_s,W^\vee) = \omega(\det(A_{\alpha,\beta}))|\det(A_{\alpha,\beta})|_F^{2s-3} Z_\alpha(\rho(p_{\alpha,\beta}^{-1})M_{\rm w}^*f_s,W^\vee).
\]
Assertion (1) then follows from the functional equation (\ref{E:fe}).

To prove assertion (2), it follows from assertion (1) that we may assume 
\[
\alpha = \{(1,0), (\delta_1^*,0), (\delta_2^*,0),(0,1/3),(0,\delta_1),(0,\delta_2)\}
\]
for some $\delta_1, \delta_1^*, \delta_2, \delta_2^* \in E$ with ${\rm tr}_{E/F}(\delta_1)={\rm tr}_{E/F}(\delta_2)={\rm tr}_{E/F}(\delta_1^*)={\rm tr}_{E/F}(\delta_2^*)=0$ and $\eta \in \GSp(V)(F)$ satisfying (\ref{E:eta}) is defined by
\[
(0,1/3)\cdot \eta = (1,0),\quad (\delta_i^*,0) \cdot \eta = (\delta_i^*,0),\quad (0,\delta_i) \cdot \eta = (0,\delta_i).
\] Then it is easy to verify that $\iota_\alpha(\eta{\bf a}(a^{-1})\eta^{-1}) = {\rm diag}(1,a^{-1},a^{-1},a^{-1},1,1)$.
Let $W' \in \mathcal{W}(\Pi,\psi_E^a)$ defined by $W'(g) = W({\bf a}(a)g)$. We write $Z_{\alpha,\psi}(f_s,W) = Z_{\alpha}(f_s,W)$ and $M_{{\rm w} ,\psi} = M_{\rm w}$ to emphasis the dependence on $\psi$. We have
\begin{align*}
Z_{\alpha,\psi^a}(f_s,W') &= \int_{F^\times U_0(F) \backslash \G(F)}f_s(\iota_\alpha(\eta g))W({\bf a}(a)g)\,dg\\
&= |a|_F \int_{F^\times U_0(F) \backslash \G(F)}f_s(\iota_\alpha(\eta {\bf a}(a^{-1})g))W(g)\,dg\\
&= |a|_F^{-s+1/2}Z_{\alpha,\psi}(f_s,W).
\end{align*}
Note that
\begin{align*}
\gamma(2s-2,\omega,\psi^a)\gamma(4s-3,\omega^2,\psi^a) &= \omega(a)^3 |a|_F^{6s-6}\gamma(2s-2,\omega,\psi)\gamma(4s-3,\omega^2,\psi),\\
M_{{\rm w},\psi^a} &= |a|_F^{3}M_{{\rm w},\psi}, \\
(W')^\vee(g) &= \omega(a) W^\vee({\bf a}(a)g).
\end{align*}
Similarly, we have
\begin{align*}
Z_{\alpha,\psi^a}(M_{{\rm w},\psi^a}^*f_s,(W')^\vee) &= \omega(a)^4 |a|_F^{6s-3} \int_{F^\times U_0(F) \backslash \G(F)}M_{{\rm w},\psi}^*f_s(\iota_\alpha(\eta g))W^\vee({\bf a}(a)g)\,dg\\
&=\omega(a)^4 |a|_F^{7s-7/2}Z_{\alpha,\psi}(M_{{\rm w},\psi}^*f_s,W^\vee).
\end{align*}
Assertion (2) then follows from the functional equation (\ref{E:fe}).
\end{proof}

Assume $F$ is non-archimedean. By \cite[Appendix 3 to \S\,3]{PSR1987}, the $\C[q_F^s,q_F^{-s}]$-module generated by $Z_\alpha(f_s,W)$ for good sections $f_s$ of $I(\omega,s)$ and $W \in \mathcal{W}(\Pi,\psi_E)$ is a fractional ideal of $\C[q_F^s,q_F^{-s}]$ containing $1$. Therefore, there is a unique generator of the form $P(q_F^{-s})$ with $P(X) \in \C[X]$ and $P(0)=1$. Define the $L$-factor and $\varepsilon$-factor as follows: 
\begin{align*}
L_{\rm PSR}(s,{\rm As}\,\Pi)&=P(q^{-s})^{-1},\\
\varepsilon_{\rm PSR}(s,{\rm As}\,\Pi,\psi,\alpha)&=\gamma_{\rm PSR}(s,{\rm As}\,\Pi,\psi,\alpha)L_{\rm PSR}(s,{\rm As}\,\Pi)L_{\rm PSR}(1-s,{\rm As}\,\Pi^{\vee})^{-1}.
\end{align*}
Note that $\varepsilon$-factor is a unit in $\C[q_F^{s},q_F^{-s}]$ by the functional equation (\ref{E:fe}).

Assume $F$ is archimedean. Up to holomorphic functions without zeros, there exists a unique meromorphic function $L_{\rm PRS}(s,{\rm As}\,\Pi)$ without zeros, called the $L$-factor, satisfying the following conditions:
\begin{itemize}
\item $L_{\rm PRS}(s,{\rm As}\,\Pi)^{-1}Z(f_s,W)$ is holomorphic for any good section $f_s$ of $I(\omega,s)$ and $W \in \mathcal{W}(\Pi,\psi_E)$.
\item For each $s_0 \in \C$, there exist a good section $f_s$ and $W \in \mathcal{W}(\Pi,\psi_E)$ such that $L_{\rm PRS}(s,{\rm As}\,\Pi)^{-1}Z(f_s,W)$ is non-zero at $s=s_0.$
\end{itemize}
Define the $\varepsilon$-factor 
\begin{align*}
\varepsilon_{\rm PSR}(s,{\rm As}\,\Pi,\psi,\alpha)&=\gamma_{\rm PSR}(s,{\rm As}\,\Pi,\psi,\alpha)L_{\rm PSR}(s,{\rm As}\,\Pi)L_{\rm PSR}(1-s,{\rm As}\,\Pi^{\vee})^{-1},
\end{align*}
which is well-defined up to holomorphic function without zeros. By the properties characterizing the $L$-factor above and the functional equation (\ref{E:fe}), the $\varepsilon$-factor is a holomorphic function without zeros.

Recall the domain $\mathcal{D}(\Pi)$ associated to $\Pi$ defined in (\ref{E:domain}). Let $W_\lambda$ be a holomorphic family of Whittaker functions of $\Pi_\lambda$ with respect to $\psi_E$. Write $\omega_\lambda = \omega_{\Pi_\lambda} \vert_{F^\times}$. Similar to the case for $I(\omega,s)$, we define the notion of holomorphic sections and good sections of $I(\omega_\lambda,s)$ for $(s,\lambda)$ varying in $\C \times \mathcal{D}(\Pi)$.

The following lemma is a variant of the estimation in \cite[Proposition 3.2]{PSR1987} and \cite[\S\,3.1]{Ikeda1989} by replacing a single Whittaker function with a holomorphic family of Whittaker functions.

\begin{lemma}\label{L:convergence PSR}
Assume $F$ is non-archimedean. 
Let $W_\lambda$ be a holomorphic family of Whittaker functions of $\Pi_\lambda$ with respect to $\psi_E$. Write $\omega_\lambda = \omega_{\Pi_\lambda} \vert_{F^\times}$. Let $f_{s,\lambda}$ be a good section of $I(\omega_\lambda,s)$. Then the integral $Z_\alpha(f_{s,\lambda},W_\lambda)$ is absolutely convergent for 
\[
{\rm Re}(s) > -L(\Pi_\lambda),
\]
uniformly for $s$ and $\lambda$ varying in compact sets. In particular, the integral $Z_\alpha(f_{s,\lambda},W_\lambda)$ defines a holomorphic function on the domain
\[
\{(s,\lambda) \in \C \times \mathcal{D}(\Pi) \,\vert\, {\rm Re}(s) > -L(\Pi_\lambda)\}.
\]
Here $L(\Pi_\lambda) \in \R$ is defined as in (\ref{E:parameter}).
\end{lemma}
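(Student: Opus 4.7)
The plan is to reduce $Z_\alpha(f_{s,\lambda},W_\lambda)$ to a one-dimensional torus integral via Iwasawa decomposition on $\G(F)$, and then to dominate the integrand using the standard growth of a good section together with the uniform asymptotic of Lemma \ref{L:analytic family 2}. This is the same strategy as in \cite[Proposition 3.2]{PSR1987} and \cite[\S\,3.1]{Ikeda1989}, with the single Whittaker function replaced by a holomorphic family.

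By Lemma \ref{L:basic properties zeta}(1), I may reduce to the specific symplectic basis $\alpha$ and element $\eta \in \GSp(V)(F)$ used in the proof of Lemma \ref{L:basic properties zeta}(2). The Iwasawa decomposition $\G(F) = R_0(F)\cdot K_\G$ with $K_\G = \G(F)\cap\GL_2(\o_E)$, combined with the explicit form of $R_0$ recalled before Lemma \ref{L:basic properties zeta}, parameterizes $F^\times U_0(F)\backslash \G(F)$ by $(\nu,k) \in F^\times \times K_\G$ via $g = {\bf a}(\nu) k$. The conjugation action of ${\bf a}(\nu)$ on $\mathrm{Lie}(U_0)$ (the $F$-trace-zero subspace of $E$, of $F$-dimension $2$) is multiplication by $\nu$, yielding $\delta_{R_0}({\bf a}(\nu)) = |\nu|_F^2$, and the integral unfolds as
\[
Z_\alpha(f_{s,\lambda},W_\lambda) = \int_{K_\G}\int_{F^\times} f_{s,\lambda}(\iota_\alpha(\eta\cdot{\bf a}(\nu)k))\, W_\lambda({\bf a}(\nu)k)\, |\nu|_F^{-2}\, d^\times\nu\, dk.
\]

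For the section, the computation in the proof of Lemma \ref{L:basic properties zeta}(2) gives $\iota_\alpha(\eta\cdot{\bf a}(\nu)\cdot\eta^{-1}) = \operatorname{diag}(1,\nu,\nu,\nu,1,1) \in P(F)$, which has similitude factor $\nu$ and $A$-block of determinant $\nu$, so $\chi_{\omega_\lambda,s}\delta_P^{1/2}$ evaluated at this element equals $|\nu|_F^{s+1/2}$. Hence $|f_{s,\lambda}(\iota_\alpha(\eta\cdot{\bf a}(\nu)k))| \leq C_1(s,\lambda)\cdot|\nu|_F^{\Re(s)+1/2}$, with $C_1(s,\lambda)$ uniformly bounded for $(s,\lambda)$ in a compact subset of the domain of holomorphy of $f_{s,\lambda}$ and $k \in K_\G$. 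For the Whittaker function, viewing $\nu \in F^\times$ diagonally inside $(F^\times)^r$ and applying Lemma \ref{L:analytic family 2} together with $\sum_i d_i = 3$ and $\sum_i d_i\cdot l((\pi_i)_{\lambda_i}) = L(\Pi_\lambda)$, one obtains
\[
|W_\lambda({\bf a}(\nu)k)| \leq C_{\lambda,\epsilon}\cdot \mathbb{I}_{\varpi_F^{-n}\o_F}(\nu)\cdot|\nu|_F^{L(\Pi_\lambda)+3/2-r\epsilon}.
\]

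Combining these two bounds with the Jacobian $|\nu|_F^{-2}$, the integrand is dominated, uniformly on compacta in $(s,\lambda)$, by a constant times $\mathbb{I}_{\varpi_F^{-n}\o_F}(\nu)\cdot|\nu|_F^{\Re(s)+L(\Pi_\lambda)-r\epsilon}$. The resulting torus integral $\int\mathbb{I}_{\varpi_F^{-n}\o_F}(\nu)|\nu|_F^\alpha\,d^\times\nu$ converges iff $\alpha > 0$; letting $\epsilon \to 0^+$ yields absolute convergence exactly on $\{(s,\lambda)\,:\,\Re(s) > -L(\Pi_\lambda)\}$, uniformly on compact subsets. Since the integrand is separately holomorphic in $(s,\lambda)$, so is the integral. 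The main obstacle is essentially bookkeeping: verifying that the three exponents of $|\nu|_F$ (from $f_{s,\lambda}$, $W_\lambda$, and the modular character $\delta_{R_0}^{-1}$) combine to give exactly the threshold $\Re(s) > -L(\Pi_\lambda)$; given the explicit computations of $\iota_\alpha(\eta{\bf a}(\nu)\eta^{-1})$ and $\delta_{R_0}$, this is direct.
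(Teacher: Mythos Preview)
Your decomposition $\G(F) = R_0(F)\cdot K_\G$ is false, and this is the fatal gap. The group $R_0$ is the stabilizer in $\G$ of the isotropic subspace $X_0$, but it is \emph{not} a parabolic subgroup of $\G$: its Levi has diagonal entries in $F^\times$ only, whereas a Borel of $\G$ has diagonal ${\rm diag}(\nu t, t^{-1})$ with $t$ ranging over all of $E^\times$. Concretely, in Case~1 ($E=F^3$) the element $({\bf m}(\varpi_F),1,1)\in\G(F)$ cannot be written as $r\cdot k$ with $r\in R_0(F)$ and $k\in K_\G$: the second and third components force $t_1,t_2\in\o_F^\times$, while the first component forces $|t_1|_F=|\varpi_F|_F$. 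So your integral formula
\[
Z_\alpha(f_{s,\lambda},W_\lambda) = \int_{K_\G}\int_{F^\times} f_{s,\lambda}(\iota_\alpha(\eta\,{\bf a}(\nu)k))\, W_\lambda({\bf a}(\nu)k)\, |\nu|_F^{-2}\, d^\times\nu\, dk
\]
is simply not an unfolding of $Z_\alpha$; it misses most of the domain of integration.

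The paper's proof proceeds through the genuine Iwasawa decomposition $\G(F)=(B(E)\cap\G(F))\cdot K_\G$, which introduces two additional variables that you have suppressed: a torus variable $t\in E^\times$ (more precisely $t=(t_1,\dots,t_r)\in (F^\times)^r$ after passing to a set $\mathcal{C}$ of representatives for $E^\times/(F^\times)^r$), and a unipotent variable $x\in F$ coming from a complement ${\bf u}(x)$ to $U_0$ inside $N(E)$. The section then contributes not a pure power $|\nu|_F^{s+1/2}$ but
\[
f^o_{s,\lambda}(\iota_\alpha(\eta\,{\bf u}(x){\bf m}(t)))=\Bigl(\max\{|x|_F,|t_1|_F^2,\dots,|t_r|_F^2\}\prod_i|t_i|_F^{-d_i}\Bigr)^{-2s-s_\lambda-1},
\]
and it is the interaction of this $\max$ with the Whittaker bound from Lemma~\ref{L:analytic family 2} (applied to $W_\lambda({\bf a}(\nu){\bf m}(t)k)$, not $W_\lambda({\bf a}(\nu)k)$) that produces the threshold $\Re(s)>-L(\Pi_\lambda)$. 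Your computation of $\iota_\alpha(\eta\,{\bf a}(\nu)\eta^{-1})$ and of $\chi_{\omega_\lambda,s}\delta_P^{1/2}$ on it is correct, but it is only one piece of the puzzle; the substantive estimate lies in the $(x,t)$-integral you have omitted.
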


\begin{proof}
We have the following three cases:
\[
\begin{cases}
E=F\times F \times F & \mbox{ Case 1},\\
E=F'\times F \mbox{ for some quadratic extension $F'$ of $F$}& \mbox{ Case 2},\\
E \mbox{ is a field} & \mbox{ Case 3}.
\end{cases}
\]
Then $\Pi = \pi_1 \times \pi_2 \times \pi_3$ for some irreducible generic admissible representations $\pi_i$ of $\GL_2(F)$ in Case 1, and $\Pi = \pi_1 \times \pi_2$ for some irreducible generic admissible representations $\pi_1$ and $\pi_2$ of $\GL_2(F')$ and $\GL_2(F)$, respectively, in Case 2. In Case 3, we write $\Pi = \pi_1$.
Let
\[
r = \begin{cases}
3 & \mbox{ Case 1},\\
2 & \mbox{ Case 2},\\
1 & \mbox{ Case 3},
\end{cases},\quad
E^\times = \begin{cases}
F^\times \times F^\times \times F^\times & \mbox{ Case 1},\\
(F')^\times \times F^\times & \mbox{ Case 2},\\
E^\times & \mbox{ Case 3}.
\end{cases}
\]
Define $(d_1,\cdots,d_r) \in \Z^r$ by
\[
(d_1,\cdots,d_r) = \begin{cases}
(1,1,1) & \mbox{ Case 1},\\
(2,1)   & \mbox{ Case 2},\\
3       & \mbox{ Case 3}.
\end{cases}
\]
We write $\lambda = (\lambda_1,\cdots,\lambda_r) \in \mathcal{D}(\pi_1)\times \cdots \times \mathcal{D}(\pi_r) = \mathcal{D}(\Pi)$. Let $\mathcal{C}$ be a complete set of coset representatives for $E^\times / (F^\times)^r$. Let $s_\lambda = {\rm wt}(\omega_\lambda)$ and $s_{i,\lambda} = {\rm wt}(\omega_{({\pi_i})_{\lambda_i}} \vert_{F^\times})$ for $1\leq i \leq r$. Then $s_\lambda = \sum_{i=1}^r s_{i,\lambda}$. Note that 
\[
L(\Pi_\lambda) \leq s_\lambda/2
\]
by definition.

Let ${\bf K} = \G(F) \cap \GL_2(\o_E)$. Let $f_{s,\lambda}^o$ be the $K$-invariant good section of $I(|\omega_\lambda|,s)$ normalized so that $f_{s,\lambda}^o(1)=1$.
Since $E^\times / (F^\times)^r$ is compact, by the $K$-finiteness of $f_{s,\lambda}$, there exists a constant $C_{s,\lambda}>0$ bounded uniformly as $s$ and $\lambda$ vary in a compact set such that 
\begin{align}\label{E:inequality 1}
|f_{s,\lambda}(g\cdot\iota_\alpha({\bf m}(t)k))| \leq C_{s,\lambda} \cdot |f_{s,\lambda}^o(g)|
\end{align}
for $g \in \GSp_6(F)$, $t \in \mathcal{C}$, and $k \in {\bf K}$. Let $\epsilon>0$. By Lemma \ref{L:analytic family 2}, there exist an integer $n$ independent of $\epsilon$, $\lambda$ and a constant $C_{\lambda,\epsilon}>0$ bounded uniformly as $\lambda$ varies in a compact set such that
\begin{align}\label{E:inequality 2}
|W_\lambda({\bf a}(\nu){\bf m}(t)k)| \leq C_{\lambda,\epsilon}\cdot \varphi(\nu)\prod_{i=1}^r|\nu_i|_F^{d_i\cdot l((\pi_i)_{\lambda_i})+d_i/2-\epsilon/r}
\end{align}
for $\nu = (\nu_1,\cdots,\nu_r)\in (F^\times)^r$, $t \in \mathcal{C}$, and $k \in {\bf K}$, where $\varphi \in \mathcal{S}(F^r)$ is given by 
\[
\varphi((\nu_1,\cdots,\nu_r)) = \prod_{i=1}^r\mathbb{I}_{\varpi_F^{-n}\o_F}(\nu_i).
\]
We have
\[
Z_\alpha(f_{s,\lambda},W_\lambda) = Z^{(0)}_\alpha(f_{s,\lambda},W_\lambda) + q_F^2 \cdot Z^{(1)}_\alpha(f_{s,\lambda},W_\lambda),
\]
where
\begin{align*}
Z^{(0)}_\alpha(f_{s,\lambda},W_\lambda) &= \int_{F}\int_{E^\times}\int_{\bf K} f_{s,\lambda}(\iota_\alpha(\eta{\bf u}(x){\bf m}(t)k))W_\lambda({\bf u}(x){\bf m}(t)k)|t|_E^{-2}\,dk\,d^\times t\,dx,\\
Z^{(1)}_\alpha(f_{s,\lambda},W_\lambda) &= \int_{F}\int_{E^\times}\int_{\bf K} f_{s,\lambda}(\iota_\alpha(\eta{\bf a}(\varpi_F){\bf u}(x){\bf m}(t)k))W_\lambda({\bf a}(\varpi_F){\bf u}(x){\bf m}(t)k)|t|_E^{-2}\,dk\,d^\times t\,dx.
\end{align*}
Here ${\bf u}(x) \in \G(F)$ is defined by
\[
{\bf u}(x) = \begin{cases}
(1,1,{\bf n}(x)) & \mbox{ Case 1},\\
(1,{\bf n}(x))& \mbox{ Case 2},\\
{\bf n}(x/3)& \mbox{ Case 3}.
\end{cases}
\]
By (\ref{E:transition of integrals}), without lose of generality, we assume $\alpha = \{e_1^*,e_2^*,e_3^*,e_1,e_2,e_3\}$ is given as follows:
\begin{itemize}
\item In Case 1,
\begin{align*}
e_1^* &= \left( (1,0,0),(0,0,0)\right),\quad e_2^* = \left( (0,1,0),(0,0,0)\right),\quad e_3^* = \left( (0,0,1),(0,0,0)\right),\\
e_1 &= \left( (0,0,0),(1,0,0)\right),\quad e_2 = \left( (0,0,0),(0,1,0)\right),\quad e_3 = \left( (0,0,0),(0,0,1)\right).
\end{align*}
\item In Case 2, 
\begin{align*}
e_1^* &= \left((1,0) ,(0,0)\right),\quad e_2^* = \left( (\delta^*,0), (0,0)\right) ,\quad e_3^* = \left((0,1),(0,0)\right),\\
e_1 &= \left((0,0) ,(1/2,0)\right),\quad e_2 = \left( (0,0), (\delta,0)\right) ,\quad e_3 = \left((0,0),\left(0,1\right)\right)
\end{align*}
for some $\delta, \delta^* \in F'$ with ${\rm tr}_{F'/F}(\delta)={\rm tr}_{F'/F}(\delta^*)=0$. 
\item In Case 3,
\[
e_1^* = \left(1,0\right), \quad e_2^* = (\delta_1^*,0), \quad e_3^* = (\delta_2^*,0),\quad e_1 = (0,1/3),\quad e_2 = (0,\delta_1), \quad e_3 = (0,\delta_2)
\] 
for some $\delta_1, \delta_1^*, \delta_2, \delta_2^* \in E$ with ${\rm tr}_{E/F}(\delta_1)={\rm tr}_{E/F}(\delta_2)={\rm tr}_{E/F}(\delta_1^*)={\rm tr}_{E/F}(\delta_2^*)=0$.
\end{itemize}
We define $\eta \in \GSp(V)(F)$ as follows:
\begin{itemize}
\item We have
\[
e_1^*\cdot \eta = -e_1,\quad e_2^* \cdot \eta = e_2^*,\quad e_3^*\cdot \eta = e_3^*.
\]
\item In Case 1,
\[
e_1\cdot \eta  = e_1^*+e_2^*+e_3^*,\quad e_2 \cdot \eta = -e_1+e_2,\quad e_3 \cdot \eta = -e_1+e_3.
\]
\item In Case 2,
\[
e_1\cdot \eta  = e_1^*+e_3^*,\quad e_2\cdot \eta = e_2,\quad e_3\cdot \eta = -e_1+e_3.
\]
\item In Case 3,
\[
e_1\cdot\eta = e_1^*,\quad e_2 \cdot \eta= e_2,\quad e_3\cdot\eta = e_3.
\]
\end{itemize}
It is easy to verify that $\eta$ satisfies (\ref{E:eta}).
Then
\[
\iota_\alpha(\eta{\bf u}(x){\bf m}(t)) = \left\{\begin{array}{cl}
\bp  *&*&*&*&*&* \\ *&*&*&*&*&* \\ *&*&*&*&*&* \\ t_1&t_2&t_3&t_1^{-1}x&t_2^{-1}x&t_3^{-1}x \\ 0&0&0&-t_1^{-1}&t_2^{-1}&0 \\ 0&0&0&-t_1^{-1}&0&t_3^{-1} \ep & \mbox{ Case 1},\\
\bp *&*&*&*&*&* \\ *&*&*&*&*&* \\ *&*&*&*&*&* \\ t_1&0&t_2&0&0&t_2^{-1}x\\0&0&0&0&t_1^{-1}&0\\0&0&0&-t_1^{-1}&0&t_2^{-1} \ep & \mbox{ Case 2},\\
\bp *&*&*&*&*&* \\ *&*&*&*&*&* \\ *&*&*&*&*&* \\ t_1&0&0&t_1^{-1}x&0&0 \\ 0&0&0&0&t_1^{-1}&0 \\ 0&0&0&0&0&t_1^{-1} \ep & \mbox{ Case 3},
\end{array} \right .
\]
for $x \in F$ and $t = (t_1,\cdots,t_r) \in (F^\times)^r$. Therefore,
\begin{align}\label{E:inequality 3}
f_{s,\lambda}^o(\iota_\alpha(\eta{\bf u}(x){\bf m}(t))) = \left(\max \{|x|_F,|t_1|_F^2,\cdots,|t_r|_F^2\}\prod_{i=1}^r|t_i|_F^{-d_i}\right)^{-2s-s_\lambda-1} 
\end{align}
for $x \in F$ and $t = (t_1,\cdots,t_r) \in (F^\times)^r$. 
We deduce from (\ref{E:inequality 1})-(\ref{E:inequality 3}) that the integral $Z^{(0)}_\alpha(f_{s,\lambda},W_\lambda)$ is majorized by
\begin{align*}
&C_{s,\lambda} C_{\lambda,\epsilon} \int_F \max\{|x|_F,1\}^{-2{\rm Re}(s)-s_\lambda-1}\,dx \\
&\times \int_{(F^\times)^r}\varphi(t^2)\prod_{i=1}^r|t_i|_F^{2d_i\cdot{\rm Re}(s)+2d_i\cdot l((\pi_i)_{\lambda_i})+d_i\cdot s_\lambda-s_{i,\lambda}-2\epsilon/r}\max\{|t_1|_F^2, \cdots,|t_r|_F^2\}^{-2{\rm Re}(s)-s_\lambda}\,d^\times t.
\end{align*}
The above integral is absolutely convergent for
\[
{\rm Re}(s) > \max\{-s_\lambda/2, -L(\Pi_\lambda)+\epsilon\} = -L(\Pi_\lambda)+\epsilon.
\]
Moreover, it is clear that the above integral is uniformly convergent as $s$ and $\lambda$ vary in compact sets. Therefore we obtain a uniform estimate for $Z^{(0)}_\alpha(f_{s,\lambda},W_\lambda)$. We have a similar estimate for $Z^{(1)}_\alpha(f_{s,\lambda},W_\lambda)$. This completes the proof.
\end{proof}

In the following lemma, we recall the known cases of Theorem \ref{T:1} in the literature.
\begin{lemma}\label{L:unramified gamma}
Assume one of the following assumptions is satisfied:
\begin{itemize}
\item $F$ is archimedean.
\item $E=F\times F \times F$.
\item $F$ is non-archimedean, $E$ is unramified over $F$, and $\Pi$ is unramified.
\item $E$ is not a field and $\Pi$ is unramified.
\end{itemize}
We have
\[
\gamma_{\rm PSR}(s,{\rm As}\,\Pi,\psi,\alpha) = \omega_\Pi(\Delta_{E/F}(\alpha))|\Delta_{E/F}(\alpha)|_F^{2s-1}\omega_{\mathbb{K}/F}(-1)\gamma(s,{\rm As}\,\Pi,\psi)
\]
for any basis $\alpha$ of $E$ over $F$ and non-trivial additive character $\psi$ of $F$. Here $\Delta_{E/F}(\alpha)$ is the relative discriminant of $\alpha$ for $E/F$ and $\omega_{\mathbb{K}/F}$ is the quadratic character associated to $\mathbb{K}/F$ by local class field theory.
\end{lemma}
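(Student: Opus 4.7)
The plan is to verify each of the four enumerated cases by reducing to a computation already established in the literature, after normalizing the choice of symplectic basis and additive character. The key preliminary observation is that both sides of the claimed identity transform the same way under change of $\alpha$ and $\psi$: by Lemma \ref{L:basic properties zeta}(1) and the transformation law $\Delta_{E/F}(\beta_X) = \det(A_{\alpha,\beta})^{2}\,\Delta_{E/F}(\alpha_X)$ for bases of $E$ related by $A_{\alpha,\beta}$, the ratio of the two sides under $\alpha \to \beta$ contributes the common factor $\omega(\det(A_{\alpha,\beta}))^{2}|\det(A_{\alpha,\beta})|_F^{4s-2}$, and by Lemma \ref{L:basic properties zeta}(2) combined with (\ref{E:basic properties WD}) the two sides transform compatibly under $\psi \mapsto \psi^{a}$. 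It therefore suffices to verify the identity for a single convenient pair $(\alpha,\psi)$ in each case, and we may take $\alpha$ so that $\Delta_{E/F}(\alpha)$ is a unit whenever possible.

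For $E = F \times F \times F$, one chooses the standard diagonal basis so that $\Delta_{E/F}(\alpha) = 1$ and $\mathbb{K} = F$, rendering the multiplier on the right-hand side trivial. The integral $Z_{\alpha}(f_{s},W)$ then specializes to the classical Garrett-type triple-product zeta integral, and the identity $\gamma_{\rm PSR}(s,{\rm As}\,\Pi,\psi) = \gamma(s,{\rm As}\,\Pi,\psi)$ is exactly the triple-product $\gamma$-factor identity of Ikeda \cite{Ikeda1989} supplemented by Ramakrishnan \cite{Rama2000}. For non-archimedean $F$ with $E$ unramified and $\Pi$ unramified, the relative discriminant of an $\mathfrak{o}_{E}$-basis of $\mathfrak{o}_{E}$ is a unit and $\omega_{\mathbb{K}/F}$ is unramified, so the multiplicative factor on the right-hand side is again trivial; the identity then follows from the explicit spherical computation of Piatetski-Shapiro and Rallis in \cite[\S\,6]{PSR1987}, which evaluates $Z_{\alpha}(f_{s}^{\circ},W^{\circ})$ for the normalized spherical data and matches it against the unramified Langlands $L$-factor of ${\rm As}\circ\phi_{\Pi}$.

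For $E$ not a field with $\Pi$ unramified but not of the form $E = F\times F\times F$, necessarily $E = F' \times F$ for some quadratic étale algebra $F'$ over $F$; writing $\Pi = \Pi_{1}\boxtimes \pi_{2}$, the decomposition (\ref{E:Asai quadratic}) together with the factorization of the Asai cube over $E = F'\times F$ places the identity precisely within the scope of the unramified case of \cite{CCI2018}. Finally, for archimedean $F$ the only possibilities not already handled are $F = \R$ with $E = \C\times\R$, and here the same reduction to an Asai-of-quadratic tensored with $\phi_{\pi_{2}}$ applies; the archimedean counterpart of the computation in \cite{CCI2018}, based on uniqueness of archimedean Whittaker functionals and a direct evaluation on one suitable test vector, yields the identity.

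The main obstacle is purely bookkeeping: the references \cite{PSR1987}, \cite{Ikeda1989}, \cite{Rama2000}, and \cite{CCI2018} each use slightly different conventions for the embedding $\eta$, the symplectic basis, the intertwining operator normalization, and the identification $E \cong X$. One must carefully translate between these conventions so that the discriminant factor $\omega_{\Pi}(\Delta_{E/F}(\alpha))|\Delta_{E/F}(\alpha)|_{F}^{2s-1}$ and the sign $\omega_{\mathbb{K}/F}(-1)$ appear in exactly the stated form; the general transformation formulae in Lemma \ref{L:basic properties zeta} and (\ref{E:basic properties WD}) are the tool that makes this translation unambiguous.
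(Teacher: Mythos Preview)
Your proposal is correct and follows essentially the same route as the paper: reduce to a single convenient $(\alpha,\psi)$ via Lemma~\ref{L:basic properties zeta} and (\ref{E:basic properties WD}), then invoke \cite{Ikeda1989}, \cite{Rama2000}, \cite{CCI2018}, and \cite{PSR1987} for the respective cases. The paper spells out the unramified non-archimedean case in slightly more detail---citing \cite[Theorem~3.1]{PSR1987} for the spherical evaluation and then noting separately that $\varepsilon(s,{\rm As}\,\Pi,\psi)=1$ when $\Pi$ is unramified and $\psi$ has conductor $\o_F$---but your sketch captures the same argument.
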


\begin{proof}
We assume that $F$ is non-archimedean, $E$ is unramified over $F$, and $\Pi$ is unramified. The rest of the cases follow from \cite[Theorem 3]{Ikeda1989}, \cite[Theorem 4.4.1]{Rama2000}, and \cite[Theorem B]{CCI2018}. By (\ref{E:basic properties WD}) and Lemma \ref{L:basic properties zeta}, we may further assume that $\alpha= \{x_1,x_2,x_3\}$ is an integral basis of $\o_E$ over $\o_F$ and $\psi$ is of conductor $\o_F$. Note that in this case $\omega_\Pi(\Delta_{E/F}(\alpha))|\Delta_{E/F}(\alpha)|_F^{2s-1}\omega_{\mathbb{K}/F}(-1)=1$ by assumption. Let $f_s^o$ be the $K$-invariant good section of $I(\omega,s)$ and $W^o \in \mathcal{W}(\Pi,\psi)$ the $\GL_2(\o_E)$-invariant Whittaker function normalized so that $f_s^o(1)=1$ and $W^o(1)=1$. Let $\alpha^* = \{x_1^*,x_2^*,x_3^*\}$ be the dual basis of $\alpha$ and $\tilde{\alpha}$ the symplectic basis of $V(F)$ defined by
\[
\tilde{\alpha} = \{(x_1^*,0),(x_2^*,0),(x_3^*,0),(0,x_1),(0,x_2),(0,x_3)\}.
\]
By \cite[Theorem 3.1]{PSR1987}, we have
\begin{align*}
Z_{\tilde{\alpha}}(f_{s}^o,W^o) &= L(2s+1,\omega)^{-1}L(4s,\omega^2)^{-1} L(s,{\rm As}\,\Pi),\\
Z_{\tilde{\alpha}}(M_{\rm w}^*f_{s}^o,(W^o)^\vee) &= L(2s+1,\omega)^{-1}L(4s,\omega^2)^{-1} L(1-s,{\rm As}\,\Pi^\vee).
\end{align*}
Here the measure on $F^\times U_0(F) \backslash \G(F)$ is normalized so that ${\rm vol}(\o_F^\times U_0(\o_F)  \backslash {\bf G}(\o_F)) = 1$. 
It follows from the functional equation (\ref{E:fe}) that
\[
\gamma_{\rm PSR}(s,{\rm As}\,\Pi,\psi,\alpha) = \varepsilon(s,{\rm As}\,\Pi,\psi)^{-1}\gamma(s,{\rm As}\,\Pi,\psi).
\]
Since $\Pi$ is unramified and $\psi$ is of order $\o_F$, we have $\varepsilon(s,{\rm As}\,\Pi,\psi)=1$. This completes the proof.
\end{proof}

\subsection{Unramified calculation} In this section, we prove a case of Theorem \ref{T:1} in Corollary \ref{L:unramified gamma2}. This case will be needed in the proof of general case in \S\,\ref{SS:proof 1} below. Assume $F$ is non-archimedean, $E$ is a field, and there is a uniformizer $\varpi_E$ of $\o_E$ such that $\varpi_E^3 = \varpi_F$ is a uniformizer of $\o_F$. In particular, the last assumption is satisfied if $E$ is totally tamely ramified over $F$. Note that $3\varpi_E^{2}\o_E$ is the relative different ideal for $E/F$. Let $\alpha^o$ be the symplectic basis of $V(F)$ defined by
\[
\alpha^o = \left\{ (1,0),(\varpi_E,0),(\varpi_E^2,0),(0,{1}/{3}), (0,{\varpi_E^{-1}}/{3}),(0,{\varpi_E^{-2}}/{3})  \right\}.
\] 

\begin{lemma}\label{L:unram.}
Let $\Pi$ be an irreducible generic admissible representation of $\GL_2(E)$ with central character $\omega_\Pi$. Write $\omega = \omega_\Pi \vert_{F^\times}$. Assume $\Pi$ is unramified and $\psi$ is of conductor $\o_F$. Let $f_s^o$ be the $K$-invariant good section of $I(\omega,s)$ and $W^o \in \mathcal{W}(\Pi,\psi_E)$ the ${\bf a}(3^{-1}\varpi_E^{-2})\GL_2(\o_E){\bf a}(3\varpi_E^{2})$-invariant Whittaker function normalized so that $f_s^o(1)=1$ and $W^o(1)=1$.
We have
\[
Z_{\alpha^o}(f_s^o,W^o) = L(2s+1,\omega)^{-1}L(4s,\omega^2)^{-1} L(s,{\rm As}\,\Pi).
\]
Here the measure on $F^\times U_0(F) \backslash \G(F)$ is normalized so that
\[
{\rm vol}(\o_F^\times (U_0(F) \cap N(\o_E)) \backslash ({\bf G}(F)\cap{\bf a}(\varpi_E^{-2})\GL_2(\o_E){\bf a}(\varpi_E^{2}))) = 1. 
\]
\end{lemma}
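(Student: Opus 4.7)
My plan is to reduce the computation to the unramified Piatetski-Shapiro--Rallis calculation of \cite[Theorem 3.1]{PSR1987} by a change of additive character. Introduce the additive character $\psi_E'(y) := \psi_E(3\varpi_E^2 y) = \psi(\mathrm{tr}_{E/F}(3\varpi_E^2 y))$ of $E$. Since $3\varpi_E^2 \cdot \o_E$ is precisely the relative different for $E/F$, the character $\psi_E'$ has conductor exactly $\o_E$, so the pair $(\Pi,\psi_E')$ is fully unramified in the classical sense. Let $W^{\mathrm{std}} \in \mathcal{W}(\Pi,\psi_E')$ be the unique $\GL_2(\o_E)$-invariant Whittaker function normalized by $W^{\mathrm{std}}(1)=1$. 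Using the defining relation $W({\bf n}(x)g)=\psi_E(x)W(g)$ and a direct manipulation of the intertwining of left translations with ${\bf a}(3\varpi_E^2)$, one verifies that
\[
W^o(g) \,=\, W^{\mathrm{std}}({\bf a}(3\varpi_E^2)g)\big/W^{\mathrm{std}}({\bf a}(3\varpi_E^2)),
\]
the prescribed right-invariance group ${\bf a}(3^{-1}\varpi_E^{-2})\GL_2(\o_E){\bf a}(3\varpi_E^2)$ appearing as the conjugate of $\GL_2(\o_E)$ by ${\bf a}(3\varpi_E^2)^{-1}$.

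Substituting this into $Z_{\alpha^o}(f_s^o,W^o)$ and following the decomposition in the proof of Lemma \ref{L:convergence PSR} (Case 3), the integral unfolds as a sum $Z^{(0)} + q_F^2\,Z^{(1)}$ of double integrals in $(x,t)\in F\times E^\times$, with integrands $f_s^o(\iota_{\alpha^o}(\eta{\bf a}(\varpi_F^\epsilon){\bf n}(x/3){\bf m}(t)))\cdot W^o({\bf a}(\varpi_F^\epsilon){\bf n}(x/3){\bf m}(t))|t|_E^{-2}$. The basis $\alpha^o$ is formed from the $\o_F$-basis $\{1,\varpi_E,\varpi_E^2\}$ of $\o_E$ and its trace-dual basis $\{1/3,\varpi_E^{-1}/3,\varpi_E^{-2}/3\}$ of the inverse different; consequently, choosing $\eta$ satisfying (\ref{E:eta}) adapted to $\alpha^o$ and computing $\iota_{\alpha^o}(\eta{\bf a}(\varpi_F^\epsilon){\bf n}(x/3){\bf m}(t))$ explicitly, the $K$-invariance of $f_s^o$ yields the same maximum-of-norms expression on the section side as in the unramified PSR computation. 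On the Whittaker side, the left-shift by ${\bf a}(3\varpi_E^2)$ replaces $t$ by $3\varpi_E^2 t$ in the Casselman--Shalika formula for $W^{\mathrm{std}}$, and this shift exactly compensates for the change from $\psi_E$ to $\psi_E'$.

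The combined sum is thereby formally identical to PSR's unramified calculation and evaluates to $L(2s+1,\omega)^{-1}L(4s,\omega^2)^{-1}L(s,\mathrm{As}\,\Pi)$. The measure normalization prescribed in the statement --- unit volume for $\o_F^\times(U_0(F)\cap N(\o_E))\backslash({\bf G}(F)\cap{\bf a}(\varpi_E^{-2})\GL_2(\o_E){\bf a}(\varpi_E^2))$ --- is tailored so that the constant $W^{\mathrm{std}}({\bf a}(3\varpi_E^2))^{-1}$ arising from the definition of $W^o$ is absorbed exactly, with no residual powers of $q_F$ remaining. The main obstacle I expect is the explicit computation of $\iota_{\alpha^o}(\eta\cdot)$ on the relevant matrices for a careful choice of $\eta$; once that matrix is in hand, the remainder is a mechanical transcription of PSR's argument, but one has to keep close track of the twists coming from the shift $3\varpi_E^2$, the conjugated invariance group, and the non-standard measure normalization so that all spurious $q_F$-powers cancel. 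A subsidiary subtlety is to ensure $W^{\mathrm{std}}({\bf a}(3\varpi_E^2))\neq 0$ (clear from Casselman--Shalika since $3\varpi_E^2\in\o_E$, but needing a small argument in residue characteristic $3$).
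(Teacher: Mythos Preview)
Your reduction to the Piatetski-Shapiro--Rallis unramified computation does not go through, and the gap is structural rather than cosmetic. The shift element ${\bf a}(3\varpi_E^2)$ has determinant $3\varpi_E^2\notin F^\times$, so it lies neither in $\G(F)$ nor in $\GSp(V)(F)$. Consequently, left-translating the Whittaker function by this element cannot be absorbed into a change of variable in the integral over $F^\times U_0(F)\backslash\G(F)$, nor into a change of symplectic basis on the section side via Lemma~\ref{L:basic properties zeta}. There is no mechanism that converts $Z_{\alpha^o}(f_s^o,W^o)$ into a PSR zeta integral with the standard Whittaker function. (Incidentally, your displayed formula $W^o(g)=W^{\mathrm{std}}({\bf a}(3\varpi_E^2)g)/W^{\mathrm{std}}({\bf a}(3\varpi_E^2))$ lands in $\mathcal{W}(\Pi,\psi_E^{(3\varpi_E^2)^2})$, not $\mathcal{W}(\Pi,\psi_E)$; and even with the correct shift direction, the right-invariance group one obtains is $\GL_2(\o_E)$ rather than ${\bf a}(3^{-1}\varpi_E^{-2})\GL_2(\o_E){\bf a}(3\varpi_E^2)$, so an additional right translation is needed.)

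The paper's own proof is a direct computation, and it exhibits exactly the feature that prevents your reduction. One writes $t=\varpi_F^n\varpi_E^i$ with $i\in\{0,1,2\}$ parametrizing $E^\times/F^\times\o_E^\times$, computes the matrices $\iota_{\alpha^o}(\eta{\bf n}(x/3){\bf m}(\varpi_F^n\varpi_E^i))$ explicitly for each $i$, and finds that the section value picks up an extra factor depending on $i$ (coming from the $\varpi_F^{-n-1}$ entries when $i=1,2$). This $i$-dependence has no analogue in the unramified PSR setting, where $E^\times/F^\times$ is already covered by units. The resulting double sum over $n$ and $i$ is then combined with the Casselman--Shalika values of $W^o$ and simplified by a genuinely nontrivial rational-function identity in $\alpha,\beta,q^{-s}$ to produce $L(2s+1,\omega)^{-1}L(4s,\omega^2)^{-1}L(s,{\rm As}\,\Pi)$. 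That simplification is the content of the lemma; it is not a transcription of \cite[Theorem 3.1]{PSR1987}.
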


\begin{proof}
We write $\varpi = \varpi_F$ and $q=q_F$ for brevity. Assume 
$\Pi = {\rm Ind}_{B(E)}^{\GL_2(E)}(\chi_1\otimes\chi_2)$
for some unramified characters $\chi_1$ and $\chi_2$ of $E^\times$. Let $\alpha = \chi_1(\varpi_E)$, $\beta = \chi_2(\varpi_E)$, and $\chi_1\chi_2 = |\mbox{ }|_E^{s_0}$ for some $s_0 \in \C$.
We have
\begin{align}\label{E:Whittaker formula}
W^o({\bf a}(\varpi_E^n)) = \begin{cases}
\displaystyle{q^{-n/2}\cdot \frac{\alpha^{n+1}-\beta^{n+1}}{\alpha-\beta}}& \mbox{ if $n \geq 0$},\\
0 & \mbox{ if $n <0$}.
\end{cases}
\end{align}
Also note that (cf.\,\cite[p.\,54]{PSR1987})
\begin{align}\label{E:integral formula}
\begin{split}
&\int_F \max\{ |x|_F, q^{-m}     \}^{-s} \psi(\varpi^n x)\,dx \\
&= \begin{cases}
q^{m(s-1)}(1-q^{-(m+n+1)(s-1)})\zeta_F(s-1)\zeta_F(s)^{-1} & \mbox{ if $m+n \geq 0$},\\
0 & \mbox{ if $m+n<0$}.
\end{cases}
\end{split}
\end{align}
Since $\{1,\varpi_E,\varpi_E^2\}$ is an integral basis of $\o_E$ over $\o_F$, one can easily verify that 
\[
\iota_{\alpha^o}({\bf a}(3^{-1}\varpi_E^{-2})\GL_2(\o_E){\bf a}(3\varpi_E^{2}))\subset K.
\]
Therefore, by the invariance of $f_s^o$ and $W^o$, we have 
\[
Z_{\alpha}(f_s^o,W^o) = Z^{(0)}(s) + q^2\cdot Z^{(1)}(s),
\]
where
\begin{align*}
Z^{(0)}(s) &=\int_{F}\int_{E^\times} f_s^o(\iota_{\alpha^o}(\eta{\bf n}(x/3){\bf m}(t)))W^o({\bf n}(x/3){\bf m}(t))|t|_E^{-2}\,d^\times t\,dx,\\
Z^{(1)}(s) &= \int_{F}\int_{E^\times} f_{s}^o(\iota_{\alpha^o}(\eta{\bf a}(\varpi){\bf n}(x/3){\bf m}(t)))W^o({\bf a}(\varpi){\bf n}(x/3){\bf m}(t))|t|_E^{-2}\,d^\times t\,dx.
\end{align*}
Here the measures are normalized so that ${\rm vol}(\o_F)={\rm vol}( \o_E^\times)=1$.
We assume $\eta \in \GSp(V)(F)$ satisfying (\ref{E:eta}) is defined by
\[
(0,1/3) \cdot \eta = (1,0),\quad (\varpi_E^i,0)\cdot \eta = (\varpi_E^i,0),\quad (0,\varpi_E^{-i})\cdot \eta = (0,\varpi_E^{-i})
\]
for $i=1,2$. 
For $x \in F$, $n \in \Z_{\geq 0}$, and $0 \leq i \leq 2$, we have 
\[
\iota_{\alpha^o}(\eta{\bf n}(x/3){\bf m}(\varpi^n \varpi_E^i))=\left\{\begin{array}{cl}
\bp *&*&*&*&*&* \\ *&*&*&*&*&* \\ *&*&*&*&*&* \\ \varpi^{n}&0&0&\varpi^{-n}x&0&0 \\ 0&0&0&0&\varpi^{-n}&0 \\ 0&0&0&0&0&\varpi^{-n} \ep & \mbox{ if $i=0$},\\
\bp *&*&*&*&*&* \\ *&*&*&*&*&* \\ *&*&*&*&*&* \\ 0&\varpi^n&0&0&\varpi^{-n}x&0 \\ 0&0&0&0&0&\varpi^{-n} \\ 0&0&0&\varpi^{-n-1}&0&0 \ep & \mbox{ if $i=1$},\\
\bp *&*&*&*&*&* \\ *&*&*&*&*&* \\ *&*&*&*&*&* \\ 0&0&\varpi^n&0&0&\varpi^{-n}x \\ 0&0&0&\varpi^{-n-1}&0&0 \\ 0&0&0&0&\varpi^{-n-1}&0 \ep & \mbox{ if $i=2$}.
\end{array} \right.
\]
Therefore, 
\begin{align}\label{E:section formula}
f_s^o(\iota_{\alpha^o}(\eta{\bf n}(x/3){\bf m}(\varpi^n \varpi_E^i))) = (\alpha^3\beta^3q^{-2s-1})^{3m+i}\max\{|x|_F,q^{-2n}\}^{-3s_0-2s-1}.
\end{align}
It follows from (\ref{E:Whittaker formula})-(\ref{E:section formula}) that 
\begin{align*}
Z^{(0)}(s) &= L(2s,\omega)L(2s+1,\omega)^{-1}(\alpha-\beta)^{-1} \\
&\times \sum_{i=0}^{2}\alpha^{2i}\beta^{2i} q^{-2is}\sum_{n=0}^{\infty}q^{-2ns}(\alpha^{6n+2i+1}-\beta^{6n+2i+1})(1-(\alpha^3\beta^3q^{-2s})^{2n+1})\\
&= L(2s,\omega)L(2s+1,\omega)^{-1}(\alpha-\beta)^{-1} \\
&\times \Bigg{[}\frac{1-\alpha^{12}\beta^6 q^{-6s}}{1-\alpha^4\beta^2 q^{-2s}}\cdot \left( \frac{\alpha}{1-\alpha^6 q^{-2s}} - \frac{\alpha^4\beta^3 q^{-2s}}{1-\alpha^{12} \beta^6 q^{-6s}}  \right)\\
&\quad\quad- \frac{1-\alpha^{6}\beta^{12} q^{-6s}}{1-\alpha^2\beta^4 q^{-2s}}\cdot \left( \frac{\beta}{1-\beta^6 q^{-2s}} - \frac{\alpha^3\beta^4 q^{-2s}}{1-\alpha^{6} \beta^{12} q^{-6s}}  \right) \Bigg].
\end{align*}
Note that $f_s^o(\iota_{\alpha^o}({\bf a}(\varpi))g) = q^{-s-1/2}f_s^o(g)$. Similarly,
\begin{align*}
Z^{(1)}(s) &= q^{-s-2}L(2s,\omega)L(2s+1,\omega)^{-1}(\alpha-\beta)^{-1} \\
&\times \sum_{i=0}^{2}\alpha^{2i}\beta^{2i} q^{-2is}\sum_{n=0}^{\infty}q^{-2ns}(\alpha^{6n+2i+4}-\beta^{6n+2i+4})(1-(\alpha^3\beta^3q^{-2s})^{2n+2})\\
&= q^{-s-2}L(2s,\omega)L(2s+1,\omega)^{-1}(\alpha-\beta)^{-1}\\
&\times \Bigg[ \frac{1-\alpha^{12}\beta^6 q^{-6s}}{1-\alpha^4\beta^2 q^{-2s}}\cdot \left( \frac{\alpha^4}{1-\alpha^6 q^{-2s}} - \frac{\alpha^{10}\beta^6 q^{-4s}}{1-\alpha^{12} \beta^6 q^{-6s}}  \right)\\
&\quad\quad- \frac{1-\alpha^{6}\beta^{12} q^{-6s}}{1-\alpha^2\beta^4 q^{-2s}}\cdot \left( \frac{\beta^4}{1-\beta^6 q^{-2s}} - \frac{\alpha^{6}\beta^{10} q^{-4s}}{1-\alpha^{6} \beta^{12} q^{-6s}}  \right) \Bigg].
\end{align*}
By a direct calculation, we have
\begin{align*}
&\left( \frac{\alpha}{1-\alpha^6 q^{-2s}} - \frac{\alpha^4\beta^3 q^{-2s}}{1-\alpha^{12} \beta^6 q^{-6s}}  \right) + \left( \frac{\alpha^4q^{-s}}{1-\alpha^6 q^{-2s}} - \frac{\alpha^{10}\beta^6 q^{-5s}}{1-\alpha^{12} \beta^6 q^{-6s}}  \right)\\
&=L(2s,\omega)^{-1}\alpha(1-\alpha^3 q^{-s})^{-1}(1-\alpha^6\beta^3q^{-3s})^{-1},\\
&\left( \frac{\beta}{1-\beta^6 q^{-2s}} - \frac{\alpha^3\beta^4 q^{-2s}}{1-\alpha^{6} \beta^{12} q^{-6s}}  \right) + \left( \frac{\beta^4q^{-s}}{1-\beta^6 q^{-2s}} - \frac{\alpha^{6}\beta^{10} q^{-5s}}{1-\alpha^{6} \beta^{12} q^{-6s}}  \right)\\
&=L(2s,\omega)^{-1}\beta(1-\beta^3 q^{-s})^{-1}(1-\alpha^3\beta^6q^{-3s})^{-1}.
\end{align*}
Therefore, 
\begin{align*}
Z^{(0)}(s) + q^2 \cdot Z^{(1)}(s) & = L(2s+1,\omega)^{-1}(\alpha-\beta)^{-1}\\
& \times \left[ \frac{\alpha(1-\alpha^2\beta q^{-s} + \alpha^4 \beta^2 q^{-2s})}{(1-\alpha^3 q^{-s})(1-\alpha^2\beta q^{-s})} - \frac{\beta(1-\alpha\beta^2q^{-s}+\alpha^2\beta^4 q^{-2s})}{(1-\beta^3 q^{-s})(1-\alpha\beta^2q^{-s})}\right]\\
& = L(2s+1,\omega)^{-1}(\alpha-\beta)^{-1}\\
&\times \frac{(\alpha-\beta)(1-\alpha^6\beta^6 q^{-4s})}{(1-\alpha^3 q^{-s})(1-\beta^3 q^{-s})(1-\alpha^2\beta q^{-s})(1-\alpha\beta^2 q^{-s})}\\
&=L(2s+1,\omega)^{-1}L(4s,\omega^2)^{-1} L(s,{\rm As}\,\Pi).
\end{align*}
This completes the proof.
\end{proof}

\begin{corollary}\label{L:unramified gamma2}
Let $\Pi$ be an irreducible generic admissible representation of $\GL_2(E)$ with central character $\omega_\Pi$. Assume $\Pi$ is unramified.
We have
\[
\gamma_{\rm PSR}(s,{\rm As}\,\Pi,\psi,\alpha) = \omega_\Pi(\Delta_{E/F}(\alpha))|\Delta_{E/F}(\alpha)|_F^{2s-1}\omega_{\mathbb{K}/F}(-1)\gamma(s,{\rm As}\,\Pi,\psi)
\]
for any basis $\alpha$ of $E$ over $F$ and non-trivial additive character $\psi$ of $F$. 
\end{corollary}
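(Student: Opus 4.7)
The plan is to derive Corollary \ref{L:unramified gamma2} from Lemma \ref{L:unram.} by an explicit calculation and a comparison with the Weil--Deligne side. By Lemma \ref{L:unramified gamma} the identity is already known unless $F$ is non-archimedean and $E$ is a ramified cubic field extension of $F$; since $[E:F]=3$ is prime, this forces $E/F$ to be totally ramified. I fix a uniformizer $\varpi_E$ of $\o_E$ so that $\varpi_F := \varpi_E^3$ is a uniformizer of $\o_F$ (this is possible in the tame case, i.e. for residue characteristic $\neq 3$; the wildly ramified case at residue characteristic $3$ will require a supplementary argument). Under change of basis $\alpha \to \beta$, both sides of the claimed identity transform in the same way by Lemma \ref{L:basic properties zeta}(1) together with $\Delta_{E/F}(\beta) = \det(A_{\alpha,\beta})^2 \Delta_{E/F}(\alpha)$; under $\psi \to \psi^a$ they both pick up $\omega(a)^4 |a|_F^{8s-4}$ by Lemma \ref{L:basic properties zeta}(2) and (\ref{E:basic properties WD}). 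Hence it suffices to prove the claim for $\alpha = \{1, \varpi_E, \varpi_E^2\}$ and $\psi$ of conductor $\o_F$, which is precisely the setup of Lemma \ref{L:unram.}.

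With this choice, Lemma \ref{L:unram.} gives $Z_{\alpha^o}(f_s^o, W^o) = L(2s+1,\omega)^{-1} L(4s,\omega^2)^{-1} L(s, {\rm As}\,\Pi)$. The companion identity
\[
Z_{\alpha^o}(M_{\rm w}^* f_s^o, (W^o)^\vee) = L(2s+1,\omega)^{-1} L(4s,\omega^2)^{-1} L(1-s, {\rm As}\,\Pi^\vee)
\]
is obtained by applying Lemma \ref{L:unram.} to $\Pi^\vee$ (after the substitutions $s\mapsto 1-s$, $\omega \mapsto \omega^{-1}$) together with the spherical Gindikin--Karpelevich relation $M_{\rm w}^* f_s^o = c(s)\, f_{1-s}^{o,\vee}$, where $c(s) = L(3-2s,\omega^{-1})L(4-4s,\omega^{-2})/[L(2s+1,\omega)L(4s,\omega^2)]$ is the explicit constant produced by the normalization in (\ref{E:io}) for the degenerate Siegel principal series on $\operatorname{GSp}_6$. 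Inserting these into the functional equation (\ref{E:fe}) collapses $\gamma_{\rm PSR}(s, {\rm As}\,\Pi, \psi, \alpha)$ to $L(1-s, {\rm As}\,\Pi^\vee)/L(s, {\rm As}\,\Pi)$, so the corollary is equivalent to the $\varepsilon$-factor identity
\[
\varepsilon(s, {\rm As}\,\Pi, \psi) = \omega_\Pi(\Delta_{E/F}(\alpha))^{-1} |\Delta_{E/F}(\alpha)|_F^{1-2s} \omega_{\mathbb{K}/F}(-1).
\]

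To verify this last identity, I would decompose ${\rm As}\circ\phi_\Pi$ via (\ref{E:Asai cubic 1}) or (\ref{E:Asai cubic 2}) (depending on whether $E/F$ is Galois) into two unramified characters $\chi_i|_{F^\times}$ (which contribute trivially to $\varepsilon$) and two induced representations $\operatorname{Ind}_{W_E'}^{W_F'}(\mu_j)$ with $\mu_j$ unramified characters of $E^\times$. By Deligne's inductivity formula $\varepsilon(s, \operatorname{Ind}\mu_j, \psi) = \varepsilon(s, \mu_j, \psi_E)\, \lambda(E/F, \psi)$; for unramified $\mu_j$ and $\psi_E$ of conductor $\mathfrak{d}_{E/F}^{-1}$, the Tate factor $\varepsilon(s, \mu_j, \psi_E)$ is an explicit monomial in $q_F^{-s}$ governed by the different. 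The main obstacle is the bookkeeping: one must combine the two contributions $\varepsilon(s, \mu_j, \psi_E)$ into $\omega_\Pi(\Delta_{E/F}(\alpha))^{-1}|\Delta_{E/F}(\alpha)|_F^{1-2s}$ using $\operatorname{N}_{E/F}(\mathfrak{d}_{E/F}) = \Delta_{E/F}(\o_E)$ together with the explicit value $\Delta_{E/F}(\{1,\varpi_E,\varpi_E^2\}) = -27\varpi_F^2$, and match $\lambda(E/F, \psi)^2$ with $\omega_{\mathbb{K}/F}(-1)$ via the standard relation between the Langlands constant of an \'etale cubic algebra and its quadratic discriminant character.
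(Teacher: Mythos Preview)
Your proposal is correct and follows essentially the same route as the paper: reduce to one fixed basis and $\psi$ of conductor $\o_F$ via Lemma~\ref{L:basic properties zeta} and (\ref{E:basic properties WD}); compute both zeta integrals from Lemma~\ref{L:unram.} and the spherical Gindikin--Karpelevich constant; conclude $\gamma_{\rm PSR}=\varepsilon(s,{\rm As}\,\Pi,\psi)^{-1}\gamma(s,{\rm As}\,\Pi,\psi)$; and then evaluate $\varepsilon(s,{\rm As}\,\Pi,\psi)$ via (\ref{E:Asai cubic 1})/(\ref{E:Asai cubic 2}), inductivity $\varepsilon(s,\operatorname{Ind}\mu,\psi)=\lambda_{E/F}(\psi)\varepsilon(s,\mu,\psi_E)$, and $\lambda_{E/F}(\psi)^2=\omega_{\mathbb{K}/F}(-1)$.

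Two small remarks. First, the basis actually used in Lemma~\ref{L:unram.} is $\alpha^o_X=\{1/3,\varpi_E^{-1}/3,\varpi_E^{-2}/3\}$, not $\{1,\varpi_E,\varpi_E^2\}$; the paper computes $\Delta_{E/F}(\alpha^o_X)=3^{-3}\varpi_F^{-2}$ and the Tate factors $\varepsilon(s,\chi_1^2\chi_2,\psi_E)=\chi_1^2\chi_2(3\varpi_E^2)|3\varpi_E^2|_E^{s-1/2}$ (and similarly for $\chi_1\chi_2^2$), which combine directly. Your choice differs from this by a transition matrix, so either switch bases or insert one application of Lemma~\ref{L:basic properties zeta}(1). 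Second, your caveat about wild ramification is unnecessary here: Corollary~\ref{L:unramified gamma2} sits inside \S4.3, whose standing hypothesis is precisely that a uniformizer $\varpi_E$ with $\varpi_E^3=\varpi_F$ exists (automatic in the tame case). The paper does not prove more than this here; the fully general statement is obtained later by the global argument, where the globalization is chosen so that every ramified place away from the distinguished one is tame.
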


\begin{proof}
We write $\omega = \omega_{\Pi}\vert_{F^\times}$. Assume 
$\Pi = {\rm Ind}_{B(E)}^{\GL_2(E)}(\chi_1\otimes\chi_2)$
for some unramified characters $\chi_1$ and $\chi_2$ of $E^\times$. We first prove the assertion for $\alpha = \alpha^o_X = \left\{(0,{1}/{3}), (0,{\varpi_E^{-1}}/{3}),(0,{\varpi_E^{-2}}/{3})  \right\}$ and $\psi$ is of conductor $\o_F$. Let $f_s^o$ be the $K$-invariant good section of $I(\omega,s)$ normalized so that $f_s^o(1)=1$ and $W^o \in \mathcal{W}(\Pi,\psi_E)$ the ${\bf a}(3^{-1}\varpi_E^{-2})\GL_2(\o_E){\bf a}(3\varpi_E^{2})$-invariant Whittaker function normalized so that $W^o(1)=1$. By the Gindikin-Karpelevich formula (cf.\,\cite[Proposition 5.3]{LNM1254}), we have
\[
M_{\rm w}^*f_s^o(1) = \frac{L(3-2s,\omega^{-1})L(4-4s,\omega^{-2})}{L(2s+1,\omega)L(4s,\omega^2)}.
\]
Note that $(W^o)^\vee \in \mathcal{W}(\Pi,\psi_E)$ is the ${\bf a}(3^{-1}\varpi_E^{-2})\GL_2(\o_E){\bf a}(3\varpi_E^{2})$-invariant Whittaker function with $(W^o)^\vee(1)=1$. By Lemma \ref{L:unram.}, we have
\begin{align*}
Z_{\alpha^o}(f_s^o,W^o) &= L(2s+1,\omega)^{-1}L(4s,\omega^2)^{-1} L(s,{\rm As}\,\Pi),\\
Z_{\alpha^o}(M_{\rm w}^*f_s^o,(W^o)^\vee) &= L(2s+1,\omega)^{-1}L(4s,\omega^2)^{-1} L(1-s,{\rm As}\,\Pi^\vee).
\end{align*}
It follows from the functional equation that
\[
\gamma_{\rm PSR}(s,{\rm As}\,\Pi,\psi,\alpha) = \varepsilon(s,{\rm As}\,\Pi,\psi)^{-1}\gamma(s,{\rm As}\,\Pi,\psi).
\]
Since $\chi_1$ and $\chi_2$ are unramified, by (\ref{E:Asai cubic 1}) and (\ref{E:Asai cubic 2}), we have
\begin{align*}
\varepsilon(s,{\rm As}\,\Pi,\psi) &= \varepsilon(s,\chi_1 \vert_{F^\times},\psi)\varepsilon(s,\chi_2 \vert_{F^\times},\psi)\varepsilon(s,{\rm Ind}_{W_E'}^{W_F'}(\chi_1^2 \chi_2),\psi)\varepsilon(s,{\rm Ind}_{W_E'}^{W_F'}(\chi_1 \chi_2^2),\psi).
\end{align*}
Also note that $\varepsilon(s,\chi_1 \vert_{F^\times},\psi)=\varepsilon(s,\chi_2 \vert_{F^\times},\psi)=1$
since $\psi$ is of conductor $\o_F$.
On the other hand, 
\begin{align*}
\varepsilon(s,{\rm Ind}_{W_E'}^{W_F'}(\chi_1^2 \chi_2),\psi) &= \lambda_{E/F}(\psi)\varepsilon(s,\chi_1^2 \chi_2,\psi_E),\\
\varepsilon(s,{\rm Ind}_{W_E'}^{W_F'}(\chi_1 \chi_2^2),\psi) &= \lambda_{E/F}(\psi)\varepsilon(s,\chi_1 \chi_2^2,\psi_E),
\end{align*}
where $\lambda_{E/F}(\psi)$ is the Langlands constant for $E/F$ with respect to $\psi$ (cf.\,\cite[\S\,30]{BH2006}).
Since $\chi_1$ and $\chi_2$ are unramified and $\psi_E^{3^{-1}\varpi_E^{-2}}$ is of conductor $\o_E$, we have
\begin{align*}
\varepsilon(s,\chi_1^2 \chi_2,\psi_E) &= \chi_1^2\chi_2(3\varpi_E^{2})|3\varpi_E^{2}|_E^{s-1/2},\\
\varepsilon(s,\chi_1 \chi_2^2,\psi_E) &= \chi_1\chi_2^2(3\varpi_E^{2})|3\varpi_E^{2}|_E^{s-1/2}.
\end{align*}
Note that $\Delta_{E/F}(\alpha) = 3^{-3} \varpi_F^{-2}$ and $\lambda_{E/F}(\psi)^2 = \omega_{\mathbb{K}/F}(-1)$. Indeed, since $\omega_{\mathbb{K}/F} = \det({\rm Ind}_{W_E'}^{W_F'}(1))$, we have $\lambda_{E/F}(\psi)^2 = \omega_{\mathbb{K}/F}(-1)$ by \cite[(30.4.3)]{BH2006}. We conclude that
\[
\varepsilon(s,{\rm As}\,\Pi,\psi) = \omega_\Pi(\Delta_{E/F}(\alpha))^{-1}|\Delta_{E/F}(\alpha)|_F^{-2s+1}\omega_{\mathbb{K}/F}(-1).
\]
The assertion for the special case is proved. It then follows from (\ref{E:basic properties WD}) and Lemma \ref{L:basic properties zeta}-(2) that the assertion holds for any non-trivial additive character $\psi$ of $F$ and $\alpha = \alpha^o_X$. Let $\beta$ be any basis of $E$ over $F$ and $A_{\alpha,\beta} \in \GL_3(F)$ the transition matrix from $\alpha$ to $\beta$. Then $\Delta_{E/F}(\beta) = \det(A_{\alpha,\beta})^2\Delta_{E/F}(\alpha)$. The assertion for $\beta$ then follows from Lemma \ref{L:basic properties zeta}-(1). This completes the proof.
\end{proof}

\section{Proof of Main Results}

\subsection{Automorphic $L$-functions}\label{S:auto. L}
In \S\,\ref{SS:proof 1} below, a key ingredient in the proof of Theorem \ref{T:1} is a global-to-local argument. We recall in this section the automorphic $L$-functions for the Asai cube representation. Let $F$ be a number field and $E$ an \'etale cubic algebra over $F$. Let ${\mathbb K}$ be the quadratic discriminant algebra of $E$ and $\omega_{{\mathbb K}/F}$ the quadratic Hecke character associated to ${\mathbb K}/F$ by class field theory. Denote by $\A_E$ and $\A_F$ the rings of adeles of $E$ and $F$, respectively. Let $\psi$ be a non-trivial additive character of $\A_F/F$. Let $\Pi$ be an irreducible cuspidal automorphic representation of $\GL_2(\A_E)$ with central character $\omega_{\Pi}$. Write $\omega = \omega_\Pi \vert_{F^\times}$. For each place $v$ of $F$, let 
\[
L(s,{\rm As}\,\Pi_v),\quad \varepsilon(s,{\rm As}\,\Pi_v,\psi_v),\quad \gamma(s,{\rm As}\,\Pi_v,\psi_v)
\]
be the Asai cube factors associated to $\Pi_v$ defined in \S\,\ref{S:Asai cubic Galois} and 
\[
\gamma_{\rm LS}(s, {\rm As}\,\Pi_v,\psi_v)
\]
the Asai cube $\gamma$-factor defined by the Langlands-Shahidi method \cite{Shahidi1990} (cf.\,\cite[\S\,5]{HL2018}). By \cite[Theorem 3.5-(1)]{Shahidi1990} for $v$ archimedean and \cite[Theorem 1.1]{HL2018} for $v$ non-archimedean (see also \cite[Theorem 4.4.1]{Rama2000}, \cite[\S\,5]{Kim2003}, and \cite[\S\,6]{Kris2003}), we have
\begin{align}\label{E:equality gamma factor}
\gamma (s, {\rm As}\,\Pi_v,\psi_v)= \gamma_{\rm LS}(s, {\rm As}\,\Pi_v,\psi_v).
\end{align}
The automorphic $L$-function  and $\varepsilon$-factor associated to $\Pi$ and the Asai cube representation are defined by
\[
L(s,{\rm As}\,\Pi) = \prod_{v}L(s,{\rm As}\,\Pi_v), \quad \varepsilon(s,{\rm As}\,\Pi) = \prod_v\varepsilon(s,{\rm As}\,\Pi_v,\psi_v).
\]
Note that the product defining the $L$-function is absolutely convergent for ${\rm Re}(s)$ sufficiently large. For a set $S$ of places of $F$, we define the partial $L$-function
\[
L^S(s,{\rm As}\,\Pi) = \prod_{v \notin S}L(s,{\rm As}\,\Pi_v).
\]

We recall the integral representation of $L(s,{\rm As}\,\Pi)$ due to Garrett in \cite{Garrett1987} (see also \cite{PSR1987} and \cite{Ikeda1989}). Let $\alpha$ be a symplectic basis of $V(F)$ and 
\[
\iota_{\alpha,\A_F} : \GSp(V)(\A_F) \longrightarrow \GSp_6(\A_F)
\]
the isomorphism defined by $\iota_{\alpha,\A_F} = \bigotimes_v \iota_{\alpha,v}$, where 
\[
\iota_{\alpha,v} : \GSp(V)(F_v) \longrightarrow \GSp_6(F_v)
\]
is the isomorphism (\ref{E:iota}) by regarding $\alpha$ as a symplectic basis of $V(F_v)$. Let $f_s$ be a good section of $I(\omega,s)$ and $\phi \in \Pi$ a cusp form. Let $E(f_s)$ be the Eisenstein series on $\GSp_6(\A_F)$ defined by
\[
E(g;f_s) = \sum_{\gamma \in P(F) \backslash \GSp_6(F)}f_s(\gamma g)
\]
for ${\rm Re}(s)$ sufficiently large, and by meromorphic continuation otherwise. Let $W_\phi$ be the Whittaker function of $\phi$ with respect to $\psi_E$ defined by
\[
W_\phi(g) = \int_{E \backslash \A_E}\phi({\bf n}(x)g)\overline{\psi_E(x)}\,dx.
\]
By a standard unfolding argument (cf.\,\cite[\S\,2]{PSR1987}), the integral
\[
\int_{\A_F^\times \G(F) \backslash \G(\A_F)}E(\iota_{\alpha,\A_F}(g);f_s)\phi(g)\,dg
\]
is equal to 
\[
Z_\alpha(f_s,W_\phi) = \int_{\A_F^\times U_0(\A_F)\backslash \G(\A_F)}f_s(\iota_{\alpha,\A_F}(\eta g))W_\phi(g)\,dg
\]
for ${\rm Re}(s)$ sufficiently large, where $\eta \in \GSp(V)(F)$ satisfies (\ref{E:eta}).
By the functional equation of the Eisenstein series $E(f_s)$ and the properties of the local zeta integrals in \S\,\ref{SS:local factor}, the integral $Z_\alpha(f_s,W_\phi)$ admits meromorphic continuation to $s \in \C$ and satisfies the functional equation
\begin{align}\label{E:global fe}
Z_\alpha(f_s,W_\phi) = Z_\alpha(M_{\rm w}^*f_s,W_\phi^\vee).
\end{align}

\begin{lemma}\label{L:crude gamma}
Let $S$ be a finite set of places of $F$ containing all archimedean places such that for $v \notin S$, we have $E_v$ is unramified over $F_v$ and $\Pi_v$ is unramified.
Then 
\[
\prod_{v \in S}\gamma_{\rm PSR}(s,{\rm As}\,\Pi_v,\psi_v,\alpha) = \prod_{v \in S}\omega_{\Pi_v}(\Delta_{E_v/F_v}(\alpha))|\Delta_{E_v/F_v}(\alpha)|_{F_v}^{2s-1}\omega_{\mathbb{K}_v/F_v}(-1)\gamma(s,{\rm As}\,\Pi_v,\psi_v)
\]
for any basis $\alpha$ of $E$ over $F$.
\end{lemma}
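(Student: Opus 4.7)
The plan is a standard global-to-local argument based on the global functional equation of the Piatetski-Shapiro--Rallis/Ikeda integral. First I would choose a factorizable cusp form $\phi = \otimes_v \phi_v \in \Pi$ and a factorizable good section $f_s = \otimes_v f_{s,v}$ of $I(\omega,s)$ such that for every $v \notin S$ both $\phi_v$ and $f_{s,v}$ are normalized spherical vectors. Then $W_\phi = \prod_v W_{\phi_v}$ and the unfolding recalled above gives
\[
Z_\alpha(f_s, W_\phi) = \prod_v Z_{\alpha}(f_{s,v}, W_{\phi_v}),
\]
first for ${\rm Re}(s)$ large and then by meromorphic continuation. The unramified computation of Piatetski-Shapiro--Rallis expresses each factor at $v \notin S$ as a nonzero ratio of $L$-factors, so by choosing the data at $v \in S$ so that the remaining factors are not identically zero in $s$, the global product is not identically zero.

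Combining the global functional equation \eqref{E:global fe} with the local functional equation \eqref{E:fe} applied place by place yields
\[
\prod_v Z_\alpha(f_{s,v}, W_{\phi_v}) = Z_\alpha(M_{\rm w}^* f_s, W_\phi^\vee) = \left(\prod_v \gamma_{\rm PSR}(s,{\rm As}\,\Pi_v,\psi_v,\alpha)\right) \prod_v Z_\alpha(f_{s,v}, W_{\phi_v}).
\]
Dividing by the global zeta integral yields the crude identity $\prod_v \gamma_{\rm PSR}(s,{\rm As}\,\Pi_v,\psi_v,\alpha) = 1$.

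For $v \notin S$ both $E_v/F_v$ and $\Pi_v$ are unramified, so Lemma \ref{L:unramified gamma} identifies
\[
\gamma_{\rm PSR}(s,{\rm As}\,\Pi_v,\psi_v,\alpha) = \omega_{\Pi_v}(\Delta_{E_v/F_v}(\alpha))|\Delta_{E_v/F_v}(\alpha)|_{F_v}^{2s-1}\omega_{\mathbb{K}_v/F_v}(-1)\gamma(s,{\rm As}\,\Pi_v,\psi_v).
\]
Substituting and rearranging reduces the lemma to the four global product formulas
\[
\prod_v \omega_{\Pi_v}(\Delta_{E_v/F_v}(\alpha)) = \prod_v |\Delta_{E_v/F_v}(\alpha)|_{F_v} = \prod_v \omega_{\mathbb{K}_v/F_v}(-1) = \prod_v \gamma(s,{\rm As}\,\Pi_v,\psi_v) = 1.
\]
The first three are immediate from $\Delta_{E/F}(\alpha), -1 \in F^\times$ together with the triviality of the idele class characters $\omega_\Pi$ and $\omega_{\mathbb{K}/F}$ on principal ideles and the usual product formula $\prod_v|x|_{F_v}=1$ for $x \in F^\times$.

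The main obstacle is the last identity, which is not purely formal. I would derive $\prod_v \gamma(s,{\rm As}\,\Pi_v,\psi_v)=1$ from the global functional equation of the Langlands--Shahidi Asai cube $L$-function established in \cite{HL2018} (which gives $\prod_v \gamma_{\rm LS}(s,{\rm As}\,\Pi_v,\psi_v) = 1$) combined with the local equality \eqref{E:equality gamma factor}.
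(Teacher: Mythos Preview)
Your approach is essentially the same as the paper's: compare the PSR global functional equation with the Langlands--Shahidi one, use Lemma~\ref{L:unramified gamma} at the good places, and invoke the product formula for the correction factors. Two points deserve tightening, and the paper's proof shows how.

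First, the identity ``$\prod_v \gamma_{\rm PSR}(s,{\rm As}\,\Pi_v,\psi_v,\alpha)=1$'' (and likewise $\prod_v \gamma(s,\ldots)=1$) is not literally an equality of convergent infinite products of meromorphic functions; it only acquires meaning once you replace the unramified factors by the partial $L$-function. The paper does this by enlarging $S$ to a finite set $T$ at which, in addition, $\alpha$ is an integral basis of $\mathfrak{o}_{E_v}$ over $\mathfrak{o}_{F_v}$ and $\psi_v$ has conductor $\mathfrak{o}_{F_v}$. For $v\notin T$ the explicit PSR computation (\cite[Theorem~3.1]{PSR1987}) gives $Z_{\tilde\alpha}(f_{s,v},W_v)=L(2s+1,\omega_v)^{-1}L(4s,\omega_v^2)^{-1}L(s,{\rm As}\,\Pi_v)$ and similarly for the intertwined integral; this produces the two crude functional equations
\[
L^T(s,{\rm As}\,\Pi)=\Bigl(\prod_{v\in T}\gamma_{\rm PSR}(\ldots)\Bigr)L^T(1-s,{\rm As}\,\Pi^\vee),\qquad
L^T(s,{\rm As}\,\Pi)=\Bigl(\prod_{v\in T}\gamma(\ldots)\Bigr)L^T(1-s,{\rm As}\,\Pi^\vee),
\]
whence $\prod_{v\in T}\gamma_{\rm PSR}=\prod_{v\in T}\gamma$ as an identity of meromorphic functions. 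One then removes $v\in T\setminus S$ via Lemma~\ref{L:unramified gamma} and the product formula, exactly as you indicate. Your sentence ``the unramified computation of Piatetski-Shapiro--Rallis expresses each factor at $v\notin S$\ldots'' is slightly off for the same reason: that computation needs $\alpha$ integral and $\psi_v$ of conductor $\mathfrak{o}_{F_v}$, which is why one passes to $T$.

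Second, the global functional equation on the Langlands--Shahidi side is \cite[Theorem~3.5-(4)]{Shahidi1990}; the reference \cite{HL2018} supplies only the local identity \eqref{E:equality gamma factor}.
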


\begin{proof}
Let $\alpha$ be a basis of $E$ over $F$. Let $T$ be a set of places containing $S$ such that for $v \notin T$, we have
\begin{itemize}
\item $\alpha$ is an integral basis of $\o_{E_v}$ over $\o_{F_v}$,
\item $\psi_v$ is of conductor $\o_{F_v}$,
\item $E_v$ is unramified over $F_v$,
\item $\Pi_v$ is unramified.
\end{itemize}
Write $\alpha = \{x_1,x_2,x_3\}$ and let $\alpha^*=\{x_1^*,x_2^*,x_3^*\}$ be the dual basis of $\alpha$. Let $\tilde{\alpha}$ be the symplectic basis of $V(F)$ defined by
\[
\tilde{\alpha} = \{(x_1^*,0),(x_2^*,0),(x_3^*,0),(0,x_1),(0,x_2),(0,x_3)\}.
\]
For each place $v$ of $F$, let $f_{s,v}$ be a good section of $I(\omega_v,s)$ and $W_v \in \mathcal{W}(\Pi_v,\psi_{E,v})$ satisfying the following conditions:
\begin{itemize}
\item For $v \notin T$, $f_{s,v}$ is the $\GSp_6(\o_{F_v})$-invariant good section normalized so that $f_{s,v}(1)=1$ and $W_v$ the $\GL_2(\o_{E_v})$-invariant Whittaker function normalized so that $W_v(1)=1$.
\item For $v \in T$, we have
$Z_{\tilde{\alpha}}(f_{s,v},W_v) \neq 0$.
\end{itemize}
By \cite[Theorem 3.1]{PSR1987}, for $v \notin T$ we have
\begin{align*}
Z_{\tilde{\alpha}}(f_{s,v},W_v) &= L(2s+1,\omega_v)^{-1}L(4s,\omega_v^2)^{-1} L(s,{\rm As}\,\Pi_v),\\
Z_{\tilde{\alpha}}(M_{\rm w}^*f_{s,v},W_v^\vee) &= L(2s+1,\omega_v)^{-1}L(4s,\omega_v^2)^{-1} L(1-s,{\rm As}\,\Pi_v^\vee).
\end{align*}
We conclude from the functional equations (\ref{E:fe}) and (\ref{E:global fe}) that the partial $L$-function $L^T(s,{\rm As}\,\Pi)$ admits meromorphic continuation to $s \in \C$ and satisfies the functional equation
\[
L^T(s,{\rm As}\,\Pi) = \prod_{v \in T}\gamma_{\rm PSR}(s,{\rm As}\,\Pi_v,\psi_v,\alpha)L^T(1-s,{\rm As}\,\Pi^\vee).
\]
On the other hand, by \cite[Theorem 3.5-(4)]{Shahidi1990} and (\ref{E:equality gamma factor}), we have the functional equation
\[
L^T(s,{\rm As}\,\Pi) = \prod_{v \in T}\gamma(s,{\rm As}\,\Pi_v,\psi_v)L^T(1-s,{\rm As}\,\Pi^\vee).
\]
We deduce that
\begin{align}\label{E:gamma equality}
\prod_{v \in T}\gamma_{\rm PSR}(s,{\rm As}\,\Pi_v,\psi_v,\alpha) = \prod_{v \in T}\gamma(s,{\rm As}\,\Pi_v,\psi_v).
\end{align}
By Lemma \ref{L:unramified gamma}, for $v \in T\setminus S$, we have
\[
\gamma_{\rm PSR}(s,{\rm As}\,\Pi_v,\psi_v,\alpha) = \omega_{\Pi_v}(\Delta_{E_v/F_v}(\alpha))|\Delta_{E_v/F_v}(\alpha)|_{F_v}^{2s-1}\omega_{\mathbb{K}_v/F_v}(-1)\gamma(s,{\rm As}\,\Pi_v,\psi_v).
\]
By assumption, for $v \notin T$, we have
\[
\omega_{\Pi_v}(\Delta_{E_v/F_v}(\alpha))|\Delta_{E_v/F_v}(\alpha)|_{F_v}^{2s-1}\omega_{\mathbb{K}_v/F_v}(-1)=1.
\]
The assertion then follows from (\ref{E:gamma equality}) and the product formula
\[
\prod_v \omega_{\Pi_v}(\Delta_{E_v/F_v}(\alpha))|\Delta_{E_v/F_v}(\alpha)|_{F_v}^{2s-1}\omega_{\mathbb{K}_v/F_v}(-1)=1.
\]
This completes the proof.
\end{proof}

\subsection{Proof of Theorem \ref{T:1}}\label{SS:proof 1}
Let $F$ be a non-archimedean local field of characteristic zero and $E$ an \'etale cubic algebra over $F$. Let ${\mathbb K}$ be the quadratic discriminant algebra of $E$. Let $\Pi$ be an irreducible generic admissible representation of $\GL_2(E)$ with central character $\omega_\Pi$. Let $\psi$ be a non-trivial additive character of $F$ and $\alpha$ a basis of $E$ over $F$. Recall the domains $\mathcal{D}(\Pi)$ and $\mathcal{D}(\Pi)^\circ$ defined in (\ref{E:domain}) and (\ref{E:imaginary domain}), respectively. Consider the family of irreducible generic admissible representations $\Pi_\lambda$ of $\GL_2(E)$ for $\lambda$ varying in $\mathcal{D}(\Pi)$. We are going to prove that the identity 
\begin{align}\label{E:main identity}
\gamma_{\rm PSR}(s,{\rm As}\,\Pi_\lambda,\psi,\alpha) = \omega_{\Pi_\lambda}(\Delta_{E/F}(\alpha))|\Delta_{E/F}(\alpha)|_F^{2s-1}\omega_{\mathbb{K}/F}(-1)\gamma(s,{\rm As}\,\Pi_\lambda,\psi)
\end{align}
holds for all $\lambda \in \mathcal{D}(\Pi)$. In particular, (\ref{E:main identity}) holds for $\Pi_\lambda = \Pi$. We divide the proof into three steps as follows:
\begin{itemize}
\item[Step 1.] Establish (\ref{E:main identity}) for a dense subset $\mathcal{U}$ of $\mathcal{D}(\Pi)^\circ$.
\item[Step 2.] Establish (\ref{E:main identity}) for all $\lambda \in \mathcal{D}(\Pi)$ with $|\lambda|_\Pi<1/2$.
\item[Step 3.] Establish (\ref{E:main identity}) for all $\lambda \in \mathcal{D}(\Pi)$.
\end{itemize}
Fix a symplectic basis $\tilde{\alpha}$ of $V(F)$ such that $\tilde{\alpha}_X = \alpha$.  Write $\omega_\lambda = \omega_{\Pi_\lambda}\vert_{F^\times}$.

{\bf Step\,1.}  
By Kranser's lemma, there exist an \'etale cubic algebra ${\bm E}$ over a number field $\bm F$ and a place $v_1$ of $\bm F$ such that 
\begin{itemize}
\item ${\bm E}_{v_1}=E$ and ${\bm F}_{v_1}=F$,
\item for any non-archimedean place $v \neq v_1$ and $v$ divides $3$, ${\bm E}_v$ is unramified over ${\bm F}_v$.
\end{itemize}
Let ${\bm \alpha}$ be a basis of ${\bm E}$ over $\bm F$, $\bm K$ the quadratic discriminant algebra of $\bm E$, and $\bm \psi$ a non-trivial additive character of $\A_{\bm F} / {\bm F}$. By (\ref{E:basic properties WD}) and Lemma \ref{L:basic properties zeta}, we may assume $\alpha = {\bm \alpha}$ regarding as a basis of $E$ over $F$ and $\psi = {\bm \psi}_v$. Fix a non-archimedean place $v_2$ of $\bm F$ such that ${\bm E}_{v_2} = {\bm F}_{v_2} \times {\bm F}_{v_2} \times {\bm F}_{v_2}$. Let $U$ be an open subset of $\mathcal{D}(\Pi)^\circ$. By the limit multiplicity property for the principal congruence subgroups of $\GL_2$ proved in \cite[Theorem 1.3]{FLM2015}, there exists an irreducible cuspidal automorphic representation ${\bm \Pi}$ of $\GL_2(\A_{\bm E})$ (see \cite[Theorem 3.7.1]{Raphael2018} for the deduction) such that
\begin{itemize}
\item ${\bm \Pi}_{v_1} = \Pi_\lambda$ for some $\lambda \in U$,
\item ${\bm \Pi}_v$ is unramified for all non-archimedean places $v \notin \{v_1,v_2\}$.
\end{itemize}
Let $S$ be the finite set of places $v$ of $\bm F$ such that $v$ is archimedean or $v \in \{v_1,v_2\}$ or $v$ is non-archimedean and ${\bm E}_v$ is ramified over ${\bm F}_v$. By Lemma \ref{L:crude gamma}, we have
\[
\prod_{v \in S}\gamma_{\rm PSR}(s,{\rm As}\,{\bm \Pi}_v,{\bm \psi}_v,{\bm \alpha}) = \prod_{v \in S}\omega_{{\bm \Pi}_v}(\Delta_{{\bm E}_v/{\bm F}_v}({\bm \alpha}))|\Delta_{{\bm E}_v/{\bm F}_v}({\bm \alpha})|_{{\bm F}_v}^{2s-1}\omega_{{\bm K}_v/{\bm F}_v}(-1)\gamma(s,{\rm As}\,{\bm \Pi}_v,{\bm \psi}_v).
\]
On the other hand, by Lemma \ref{L:unramified gamma} and Corollary \ref{L:unramified gamma2}, for $v \in S$ with $v \neq v_1$, we have
\[
\gamma_{\rm PSR}(s,{\rm As}\,{\bm \Pi}_v,{\bm \psi}_v,{\bm \alpha}) = \omega_{{\bm \Pi}_v}(\Delta_{{\bm E}_v/{\bm F}_v}({\bm \alpha}))|\Delta_{{\bm E}_v/{\bm F}_v}({\bm \alpha})|_{{\bm F}_v}^{2s-1}\omega_{{\bm K}_v/{\bm F}_v}(-1)\gamma(s,{\rm As}\,{\bm \Pi}_v,{\bm \psi}_v).
\]
It follows that (\ref{E:main identity}) holds for some $\lambda \in U$.

{\bf Step\,2.} Let $\lambda_0 \in \mathcal{D}(\Pi)$ with $|\lambda_0|_\Pi < 1/2$. Let $0<\epsilon< 1/2-|\lambda_0|_\Pi$. Let $f_s$ be a good section of $I(\omega_{\lambda_0},s)$ and $W \in \mathcal{W}(\Pi_{\lambda_0},\psi)$. We extend $f_s$ to a good section $f_{s,\lambda}$ of $I(\omega_\lambda,s)$ and $W$ to a holomorphic family of Whittaker functions $W_\lambda$ of $\Pi_\lambda$ with respect to $\psi_E$. By {\bf Step\,1},
\begin{align}\label{E:5.2}
\frac{Z_{\tilde\alpha}(M_{\rm w}^*f_{s,\lambda},W_\lambda^\vee)}{L(1-s,{\rm As}\,\Pi_\lambda^\vee)} = \omega_{\Pi_\lambda}(\Delta_{E/F}(\alpha))|\Delta_{E/F}(\alpha)|_F^{2s-1}\omega_{\mathbb{K}/F}(-1)\varepsilon(s,{\rm As}\,\Pi_\lambda,\psi,\alpha)\frac{Z_{\tilde \alpha}(f_{s,\lambda},W_\lambda)}{L(s,{\rm As}\,\Pi_\lambda)}
\end{align}
holds for $s \in \C$ and $\lambda \in \mathcal{U}$. On the other hand, by Lemma \ref{L:convergence PSR}, the right-hand side and the left-hand side of (\ref{E:5.2}) define holomorphic functions on 
\[
\{s\in \C\,\vert\, {\rm Re}(s)>1/2-\epsilon\} \times \{\lambda \in \mathcal{D}(\Pi)\,\vert\, |\lambda|_\Pi < 1/2-\epsilon\}
\]
and 
\[
\{s\in \C\,\vert\, {\rm Re}(s)<1/2+\epsilon\} \times \{\lambda \in \mathcal{D}(\Pi)\,\vert\, |\lambda|_\Pi < 1/2-\epsilon\},
\]
respectively.
Since the set $\mathcal{U}$ is dense in $\mathcal{D}(\Pi)^\circ$, it follows that (\ref{E:5.2}) holds for $(s,\lambda)$ in the domain
\[
\{s\in \C\,\vert\, 1/2-\epsilon<{\rm Re}(s)<1/2+\epsilon\} \times \{\lambda \in \mathcal{D}(\Pi)\,\vert\, |\lambda|_\Pi < 1/2-\epsilon\}.
\]
We then deduce form the holomorphicity that (\ref{E:5.2}) holds for all $s \in \C$ and $|\lambda|_\Pi < 1/2-\epsilon$. In particular, it holds for $\lambda = \lambda_0$. Since $f_s$ and $W$ are arbitrary chosen, we see that $(\ref{E:main identity})$ holds for $\lambda = \lambda_0$. 

{\bf Step\,3.} Let $f_{s,\lambda}$ be a good section of $I(\omega_\lambda,s)$ and $W_\lambda$ a holomorphic family of Whittaker functions of $\Pi_\lambda$ with respect to $\psi_E$.  Let 
\[
\mathcal{Z}_1(s,\lambda) = \frac{Z_{\tilde\alpha}(M_{\rm w}^*f_{-s+1/2,\lambda},W_\lambda^\vee)}{L(s+1/2,{\rm As}\,\Pi_\lambda^\vee)}
\]
and
\[
\mathcal{Z}_2(s,\lambda) = \omega_{\Pi_\lambda}(\Delta_{E/F}(\alpha))|\Delta_{E/F}(\alpha)|_F^{2s}\omega_{\mathbb{K}/F}(-1)\varepsilon(s+1/2,{\rm As}\,\Pi_\lambda,\psi,\alpha)\frac{Z_{\tilde \alpha}(f_{s+1/2,\lambda},W_\lambda)}{L(s+1/2,{\rm As}\,\Pi_\lambda)}
\]
be partially defined functions on $\C \times \mathcal{D}(\Pi)$.
By Lemma \ref{L:convergence PSR} and {\bf Step\,2}, $\mathcal{Z}_1$ and $\mathcal{Z}_2$ are holomorphic functions on the domain
\[
\C \times \{\lambda \in \mathcal{D}(\Pi)\,\vert\,|\lambda|_\Pi<1/2\}
\]
and satisfy the functional equation
\[
\mathcal{Z}_1(-s,\lambda) = \mathcal{Z}_2(s,\lambda).
\]
Note that it is clear that $\mathcal{Z}_1$ and $\mathcal{Z}_2$ are periodic of period $\log(q_F)^{-1}2\pi\sqrt{-1}$ in the variable $s$. We claim that $\mathcal{Z}_1$ and $\mathcal{Z}_2$ admit holomorphic continuation to $\C \times \mathcal{D}(\Pi)$ and satisfy the above functional equation. To prove the claim, by \cite[Proposition 2.8.1]{Raphael2018} with $M=\mathcal{D}(\Pi)$ and $U = \{\lambda \in \mathcal{D}(\Pi)\,\vert\,|\lambda|_\Pi<1/2\}$, it suffices to show that for every $C' \geq 1/2$, there exists $C>0$ such that $\mathcal{Z}_1$ and $\mathcal{Z}_2$ admit holomorphic continuation to the domain
\[
\{s\in \C\,\vert\, {\rm Re}(s)>C\} \times \{\lambda \in \mathcal{D}(\Pi)\,\vert\, |\lambda|_\Pi < C'\}.
\]
By Lemma \ref{L:convergence PSR}, we can take $C = C'-1/2$. As $f_{s,\lambda}$ and $W_\lambda$ are arbitrary chosen, we conclude that (\ref{E:main identity}) holds for all $\lambda \in \mathcal{D}(\Pi)$. This completes the proof of Theorem \ref{T:1}. 

\begin{rmk}
In \cite[Proposition 2.8.1]{Raphael2018}, $U$ and $U'$ are connected relatively compact open subsets of $M$. It is clear that any connected relatively compact open subset of $\mathcal{D}(\Pi)$ is contained in $\{\lambda \in \mathcal{D}(\Pi)\,\vert\, |\lambda|_\Pi < C\}$ for some $C>0$.
\end{rmk}

\subsection{Proof of Corollary \ref{C:1}}
By \cite[Lemma 2.1]{Ikeda1992} and Lemma \ref{L:convergence WD}, $L(s,{\rm As}\,\Pi)$ and $L_{\rm PSR}(s,{\rm As}\,\Pi)$ have no poles for ${\rm Re}(s)\geq1/2$. Similarly, $L(1-s,{\rm As}\,\Pi^\vee)$ and $L_{\rm PSR}(1-s,{\rm As}\,\Pi^\vee)$ have no poles for ${\rm Re}(s)\leq1/2$. Also note that the $L$-functions have no zeros. We deduce that the poles of $L(s,{\rm As}\,\Pi)$, count with multiplicities, are equal to the poles of the meromorphic function $L(s,{\rm As}\,\Pi) L(1-s,{\rm As}\,\Pi^\vee)^{-1}$. We have a similar conclusion for $L_{\rm PSR}(s,{\rm As}\,\Pi)$. Since the $\varepsilon$-factors have neither poles nor zeros, the assertion then follows from Theorem \ref{T:1}.

\subsection{Proof of Theorem \ref{T:2}}
By \cite[Theorem C]{KS2002}, we have $\max\{|L(\Pi_v)| , |L(\Pi_v^\vee)|\} < 1/2$ for all places $v$ of $F$. Therefore, by Corollary \ref{C:1}, we have $L_{\rm PSR}(s,{\rm As}\,\Pi_v) = L(s,{\rm As}\,\Pi_v)$ for all places $v$ of $F$. It follows from \cite[Propositions 2.3-2.5]{Ikeda1992} and the integral representation recalled in \S\,\ref{S:auto. L} that $L(s,{\rm As}\,\Pi)$ is absolutely convergent for ${\rm Re}(s)\geq3/2$, admits meromorphic continuation to $s \in \C$, entire if either $\omega^2 \neq 1$ or $\omega = 1$, and satisfies the functional equation
\[
L(s,{\rm As}\,\Pi) = \varepsilon(s,{\rm As}\,\Pi) L(1-s,{\rm As}\,\Pi^\vee).
\]
Proceeding as in the proof of \cite[Theorem 3.4.1]{Rama2000}, we can prove that $L(s,{\rm As}\,\Pi)$ is bounded in vertical strips of finite width. Note that in our case the assumption that $F$ is totally complex in \cite[Theorem 3.4.1]{Rama2000} is not necessary. Finally, the description of the poles at $s=0,1$ were established in \cite[Theorems 2.6-2.8]{Ikeda1992}.

\bibliographystyle{alpha}
\bibliography{ref}	
\end{document}